\newtheorem*{rep@theorem}{\rep@title}
\newcommand{\newreptheorem}[2]{%
\newenvironment{rep#1}[1]{%
 \def\rep@title{#2 \ref{##1}}%
 \begin{rep@theorem}}%
 {\end{rep@theorem}}}
\newtheorem{theorem}{Theorem}[section]
\newtheorem{lemma}[theorem]{Lemma}
\theoremstyle{definition}
\newtheorem{definition}[theorem]{Definition}
\def\dotminussym#1#2{%
  \setbox0=\hbox{$\m@th#1-$}%
  \kern.5\wd0%
  \hbox to 0pt{\hss\hbox{$\m@th#1-$}\hss}%
  \raise.6\ht0\hbox to 0pt{\hss$\m@th#1.$\hss}%
  \kern.5\wd0}
\mathchardef\mhyphen="2D
\begin{document}

\title{Limits of Sequences of Markov Chains}
\author{Henry Towsner}
\date{\today}
\thanks{Partially supported by NSF grant DMS-1340666.}
\keywords{Markov chain, graph limit, ultraproduct}

\begin{abstract}
We study the limiting object of a sequence of Markov chains analogous to the limits of graphs, hypergraphs, and other objects which have been studied.  Following a suggestion of Aldous, we assign to a convergent sequence of finite Markov chains with bounded mixing times a unique limit object: an infinite Markov chain with a measurable state space.  The limits of the Markov chains we consider have discrete spectra, which makes the limit theory simpler than the general graph case, and illustrates how the discrete spectrum setting (sometimes called ``random-free'' or ``product measurable'') is simpler than the general case.
\end{abstract}

\maketitle

\section{Introduction}

Suppose we have a continuous time Markov chain with a very large, but finite, number of states.  (We are interested in the case where the chain is reversible and time-homogeneous.)  We would expect that the chain resembles a chain with an infinite measurable state space.  In this paper, we make this precise.  To any finite continuous time Markov chain we can associate a partially exchangeable array of random variables by randomly sampling a sequence of points from the space and taking the transition rates.  To a sequence of finite chains we associate an infinite Markov chain whose associated partially exchangeable array is a limit (in distribution) of the arrays of the finite chains.  We further show that (after some refinement) these infinite chains are essentially unique.

In order to ask for a sequence of Markov chains to have a limit, we need the sequence to be ``bounded'' in a some sense.  Following a suggestion by Aldous \cite{aldousnote}, we assume the mixing of the sequence is uniformly bounded (that is, for each time $t$ there is a bound $B_t$ such that the mixing of each chain at time $t$ is bounded by $B_t$).  Given such a sequence, we identify the sequence with an infinite Markov chain and show that the statistical behavior of the finite chains converges to the statistical behavior of this infinite chain.  This confirms a conjecture of Aldous on the existence of such a compactification.  Under suitable assumptions on the infinite chain, we identify it up to isomorphism.

After some preliminaries, Theorem \ref{thm:main} states the existence half of our main result.  We prove this in Section 5.  In Section 6 we show the converse, that any infinite Markov chain of the kind we consider is the limit of some finite sequence.  In Section 7 we give the corresponding uniqueness result: we show how to refine our infinite Markov chains to a (potentially) infinite Markov chain with additionl properties which is uniquely determined by the corresponding partially exchangeable array.

Similar results for graphs and hypergraphs have been known for several decades \cite{aldous:MR637937,hoover:arrays} and have recently been extensively studied \cite{borgs:MR2455626,lovasz:MR2306658,lovasz:MR2274085,diaconis:MR2463439,aldous:MR637937,hoover:arrays,kallenberg:MR2161313,austin08,hrushovski,goldbring:_approx_logic_measure,MR2964622,tao07} under various names, especially as \emph{graph limits} or, as here, \emph{ultraproducts}.  For example, a similar analysis was recently given by Elek \cite{2012arXiv1205.6936E} in the setting of metric measure spaces.  Gromov \cite{MR2307192} identified convergent sequences of metric measure spaces with certain partially exchangeable arrays of random variables.  Elek identifies each such array, essentially uniquely, with an infinitary object (a ``quantum metric measure space'').

Markov chains turn out to be simpler than these other cases in one important respect: our boundedness assumption implies that the limit has a discrete spectrum.  For graphs, sequences with discrete spectra have been a particular topic of interest (these are the ``random-free'' graph limits of \cite{MR3073488,MR2815610}, and the objects the author has called ``$\mathcal{B}_{2,1}$-measurable'' in \cite{goldbring:_approx_logic_measure,henry12:_analy_approac_spars_hyper,2013arXiv1312.4882T}), especially because of their connection to the Szemer\'edi regularity lemma.  In our case it allows us to avoid certain complications compared to the graph case.

\section{Three Descriptions}

\subsection{Finite State Markov Chains}

We first recall the basic definitions for the finite objects we will be considering.

\begin{definition}
  A \emph{finite state continuous time homogeneous Markov chain} consists of a finite \emph{state space} $\Omega$, a family of $\Omega$-valued random variables $\{\mathbf{X}(t)\}_{t\geq 0}$, and a \emph{transition rate matrix} $\mathbf{Q}$ such that:
  \begin{itemize}
  \item Each non-diagonal entry $\mathbf{Q}(\omega,\omega')$ with $\omega\neq\omega'$ is non-negative,
  \item The rows of $\mathbf{Q}$ sum to $0$,
  \item for any $\omega,\omega'\in \Omega$ and any $s,t>0$,
\[\mathbb{P}(\mathbf{X}(s+t)=\omega\mid\mathbf{X}(s)=\omega')=e^{t\mathbf{Q}}(\omega,\omega').\]
  \end{itemize}
\end{definition}
Given the matrix $\mathbf{Q}$, we associate the family of \emph{transition probability} matrices $\mathbf{P}_t=e^{t\mathbf{Q}}$.

For the remainder of this paper we will use ``finite Markov chain'' to mean a finite state continuous time homogeneous Markov chain.

\begin{definition}
A probability distribution $\pi$ on $\Omega$ is a \emph{stationary distribution} if for every $\omega'\in\Omega$, $\sum_\omega\pi(\omega)\mathbf{Q}(\omega,\omega')=\pi(\omega')$.
\end{definition}

\begin{definition}
  We say the Markov chain is \emph{reversible} if there is a stationary distribution $\pi$ on $\Omega$ such that for every $\omega,\omega'$,
\[\pi(\omega)\mathbf{Q}(\omega,\omega')=\pi(\omega')\mathbf{Q}(\omega',\omega).\]

  We say a Markov chain is \emph{irreducible} if every entry in $\mathbf{P}_t$ is strictly positive for some (equivalently, for every) $t>0$.
\end{definition}
It is standard that an irreducible Markov chain has at most one stationary distribution $\pi$ and $\pi(\omega)>0$ for all $\omega\in\Omega$.

In order to have well-behaved limits, we need some type of boundedness condition.  An easy example illustrates why this is necessary: consider a sequence of reversible Markov chains consisting of two points $\omega_0,\omega_1$ (each with measure $1/2$ in the stationary distribution) where the transition rate in the $n$-th Markov chain, $\mathbf{Q}_{(n)}(\omega_0,\omega_1)=1/n$.  That is, as we consider later chains in the sequence, the chain mixes more slowly.  In the limit, the mixing approaches $0$, and indeed, in any limit object the two points would not mix at all, causing the limit to be reducible.\footnote{If we wish to insist on our Markov chains having a number of states approaching infinity, replace $\omega_0$ and $\omega_1$ with blocks of states $\Omega_{(n),0},\Omega_{(n),1}$ where the sets are growing as $n$ grows, the transition rates within $\Omega_0$ and $\Omega_1$ are constant, but the transition rates between any $\omega_0\in\Omega_{(n),0}$ and $\omega_1\in\Omega_{(n),1}$ is shrinking quickly enough in $n$.}

Aldous proposes \cite{aldousnote} that this be addressed by normalizing the mixing time:
\begin{definition}
  We define $G(t)$, the \emph{mixing at time $t$} (relative to $\pi$) to be
\[\sum_{\omega}\mathbf{P}_t(\omega,\omega).\]
We say the chain is \emph{normalized} if $G(1)=2$.
\end{definition}
Note that an irreducible Markov chain is fully mixed precisely when $G(t)=1$.  (Even if this doesn't happen in any finite time, it could happen in the limit.)

Aldous points out that this is still not enough to ensure reasonable limit objects because a sequence of normalized Markov chains might experience the $L^2$ cutoff phenomenon (see \cite{MR2375599,MR841111}).  If the cutoff phenomenon occurs, we have $\lim_n G_{(n)}(t)=\infty$ whenever $t<1$ and $\lim_n G_{(n)}(t)=1$ for $t>1$: as $n$ approaches infinity, the mixing happens in a shorter and shorter window around $t=1$.  Following Aldous' suggestion, we work with sequences of chains where $L^2$ cutoff does not occur.  Equivalently:
\begin{definition}
  A \emph{bounded sequence of Markov chains} is a sequence of finite Markov chains $\Omega_{(n)},\mathbf{Q}_{(n)}$ such that:
  \begin{itemize}
  \item Each chain is irreducible, reversible, and normalized,
  \item For each $t>0$ there is a $B_t$ such that $G_{(n)}(t)\leq B_t$ for all $n$.
  \end{itemize}
\end{definition}

Note that, other than the boundedness of $G_{(n)}(t)$, there are no convergence requirements on a bounded sequence.  Thus we will pass to subsequences of a given bounded sequence in order to have suitable limit objects.

A sequence of Markov chains can have different portions of its mixing happen at different time scales: consider the sequence of reversible Markov chains consisting of four points, $\omega_{00},\omega_{01},\omega_{10},\omega_{11}$ (each with measure $1/4$ in the stationary distribution) where the transition rate in the $n$-th Markov chain $\mathbf{Q}_{(n)}(\omega_{i0},\omega_{i1})=n$ while $\mathbf{Q}_{(n)}(\omega_{0i},\omega_{1i})=1$ (and, for simplicity, $\mathbf{Q}_{(n)}(\omega_{00},\omega_{11})=0$).  If we take $B_0=\{\omega_{00},\omega_{01}\}$ and $B_1=\{\omega_{10},\omega_{11}\}$, the mixing between $B_0$ and $B_1$ has a fixed rate while the mixing within the sets $B_0$ and $B_1$ happens faster and faster.  (Indeed, when we take the limit object, the sets $B_0$ and $B_1$ will each become an indistinguishable blob: in the limit, we can't distinguish $\omega_{00}$ from $\omega_{01}$ because they mix instantly.)

These Markov chains have the property that there is a $t_0$ independent of $n$ so that $G_{(n)}(t_0)\approx 2$.  (In other words, they already approximately satisfy a normalization condition of the form $G(a)=b$ for some $a>0$ and some $b\in(1,\infty)$.)  In the limit, ``some of the mixing''---the mixing internal to $B_0$ and $B_1$---happens very quickly, but the ``largest scale'' of mixing, the mixing between $B_0$ and $B_1$, happens in finite time: when $t$ is very small (and $n$ large), $\mathbf{P}_{(n),t}(\omega_{00},\omega_{11})$ is small while $\mathfrak{P}_{(n),1/t}(\omega_{00},\omega_{11})$ is close to $1/4$.

This is what we are ensuring by normalizing and bounding $G$: that this largest scale of mixing happens at the same scale as $t$.  If we dropped the boundedness requirement, we would be including limits which go from a discrete collection of completely unmixed blocks at every time $t<1$ to being completely mixed at every time $t>1$.  If we dropped the normalization requirement, we could have chains which never fully mix (if $\lim_{n\rightarrow\infty}G_{(n)}(t)\rightarrow \infty$ for every $t$) or which have already mixed at every $t>0$ (if $\lim_{n\rightarrow\infty}G_{(n)}(t)\rightarrow 1$ for every $t$).

(We do still retain one anomolous case: where $\lim_{n\rightarrow\infty}G_{(n)}(t)=2$ for \emph{all} times $t$.  In the limit, this chain must be constant (because $G$ is not changing) and therefore reducible.  But the presence of this case will not interfere with any of our arguments; we could get rid of it by adding the assumption that $\lim_{t\rightarrow\infty}\lim_{n\rightarrow\infty}G_{(n)}(t)=1$.)

\subsection{Pseudofinite Chains}

Our second notion is a specific kind of infinite space Markov chains.  These are in some respects simpler to work with than arbitrary infinite space continuous time Markov chains, so to distinguish them, we call them \emph{pseudofinite continuous time Markov chains}, or just \emph{pseudofinite Markov chains}.  (The term pseudofinite here comes from model theory, where it refers to a model which has the same first-order logical properties as a finite model.  This will be true of our pseudofinite Markov chains which, as we will see, are essentially equivalent to convergent limits of finite Markov chains.)

\begin{definition}
By a \emph{pseudofinite continuous time Markov chain}, we mean a probability space $(\Omega,\mathcal{B},\pi)$ and, for each $t\in\mathbb{R}^{>0}$, a measurable function
\[\hat p_t:\Omega^2\rightarrow\mathbb{R}^{\geq 0}\]
such that (taking all integrals over $\pi$):
\begin{itemize}
\item (Stochasticity) For every $t>0$ and almost every $\omega$, $\int \hat p_t(\omega,\omega')d\omega'=1$,
\item (Symmetry) For every $t>0$ and almost every $\omega,\omega'$, $\hat p_t(\omega,\omega')=\hat p_t(\omega',\omega)$,
\item (Chapman-Kolmogorov) For every $s,t>0$ and almost every $\omega,\omega'$, $\hat p_{s+t}(\omega,\omega')=\int \hat p_s(\omega,\xi)\hat p_t(\xi,\omega')d\xi$,
\item (Diagonal Chapman-Kolmogorov) For every $s,t>0$ and almost every $\omega$, $\hat p_{s+t}(\omega,\omega)=\int \hat p_s(\omega,\xi)\hat p_t(\xi,\omega)d\xi$,
\item (Boundedness) For every $t>0$, $\int\hat p_t(\omega,\omega)d\omega$ is finite,
\item (Normality) $\int\hat p_1(\omega,\omega)d\omega=2$,
\item (Continuity) The function $t\mapsto \hat p_t$ is continuous with respect to the $L^2$ norm---that is, for every $t>0$, $\lim_{s\rightarrow t}||\hat p_t-\hat p_s||_{L^2(\pi\times\pi)}=0$.
\end{itemize}

Given a pseudofinite continuous time Markov chain $\hat p_t$, we write $G(t)=\int \hat p_t(\omega,\omega)d\omega$.
\end{definition}
Below, when discussing pseudofinite Markov chains, we will generally assume that integrals are over $\pi$ and that the $L^2$ space of interest is $L^2(\pi\times\pi)$.

Even stating the continuity property suggests that each $\hat p_t$ has bounded $L^2$ norm, and this follows from diagonal Chapman-Kolmogorov, symmetry, and boundedness:
\begin{lemma}
  $||\hat p_t||_{L^2}=\sqrt{G(2t)}$
\end{lemma}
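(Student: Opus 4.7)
The plan is to square the $L^2$-norm and reduce it to the diagonal integral at time $2t$ using symmetry and the diagonal Chapman--Kolmogorov relation. Concretely, I would first compute
\[ \|\hat p_t\|_{L^2}^2 = \int\!\!\int \hat p_t(\omega,\omega')^2 \, d\omega'\, d\omega, \]
then rewrite one of the two factors via the symmetry hypothesis, turning the integrand into $\hat p_t(\omega,\omega')\hat p_t(\omega',\omega)$.

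Next I would apply the diagonal Chapman--Kolmogorov identity with $s=t$: for almost every $\omega$,
\[ \hat p_{2t}(\omega,\omega) = \int \hat p_t(\omega,\xi)\hat p_t(\xi,\omega)\, d\xi. \]
Integrating this over $\omega$ (using Fubini, justified by the boundedness assumption together with non-negativity of $\hat p_t$, which allows Tonelli to be applied without integrability fuss) gives exactly $\int\!\!\int \hat p_t(\omega,\omega')\hat p_t(\omega',\omega)\, d\omega'\, d\omega = G(2t)$. Taking square roots yields the claimed identity, and also confirms that $\hat p_t$ does lie in $L^2$, since $G(2t)$ is finite by the boundedness axiom.

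There is essentially no obstacle here: the only thing one has to be slightly careful about is that the ``almost every'' qualifiers in symmetry and diagonal Chapman--Kolmogorov line up (they do, since each is a condition on a full-measure set in $\Omega$ or $\Omega^2$), and that the application of Fubini/Tonelli is legitimate (it is, because the integrand is non-negative). So the entire argument is a two-line manipulation, and the lemma is really just pointing out that the ``diagonal'' hypothesis was built into the definition precisely to force $\hat p_t \in L^2$ with the expected norm.
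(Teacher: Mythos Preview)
Your proposal is correct and follows exactly the same approach as the paper: square the norm, use symmetry to flip one factor, apply diagonal Chapman--Kolmogorov with $s=t$, and recognize the result as $G(2t)$. The paper's proof is precisely this four-line chain of equalities, without the additional commentary on Fubini/Tonelli and null sets.
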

\begin{proof}
  \begin{align*}
    ||\hat p_t||^2_{L^2}
&=\iint \hat p_t^2(\omega,\omega') d\omega' d\omega \\
&=\iint \hat p_t(\omega,\omega')\hat p_t(\omega',\omega) d\omega' d\omega\\
&=\int \hat p_{2t}(\omega,\omega) d\omega\\
&=G(2t).
  \end{align*}
\end{proof}

Note that, for each $t$, $\hat p_t$ induces an operator $\widehat{P}_t$ on the $L^2$ functions by
\[\widehat{P}_t(f)(\omega)=\int f(\xi)\hat p_t(\omega,\xi)d\xi.\]
Stochasticity and symmetry mean that the operator preserves the $L^1$ norm, and the Chapman-Kolmogorov condition ensures that the action is actually a flow---$\widehat{P}_t\circ\widehat{P}_s=\widehat{P}_{s+t}$.

\subsection{Properties of Pseudofinite Chains}

In this subsection we show that the usual eigenvector decomposition can be recovered for pseudofinite continuous space Markov chains, by essentially the usual proof.

$\widehat{{P}_t}$ is the Hilbert-Schmidt operator corresponding to $\hat p_t$.  Since $\hat p_t$ is symmetric, $\widehat{{P}_t}$ is symmetric as well.  The spectral theorem tells us that for each $\widehat{{P}_t}$, there is a basis for the $L^2$ functions consisting of eigenvectors of $\widehat{{P}_t}$.  Clearly the function which is constantly $1$ is an eigenvector, with eigenvalue $1$ (by stochasticity of $\hat p$).

\begin{lemma}
  $\widehat{P}_t$ is positive semidefinite.
\end{lemma}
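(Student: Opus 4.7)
The plan is to exploit the Chapman-Kolmogorov relation to factor $\widehat{P}_t$ as the square of a self-adjoint operator.

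First, I would observe that by the lemma just proven, each $\hat p_{s}$ lies in $L^2(\pi\times\pi)$, so $\widehat{P}_s$ is a bounded operator on $L^2(\pi)$ (it is the Hilbert-Schmidt operator with kernel $\hat p_s$). Symmetry of $\hat p_s$ then gives that $\widehat{P}_s$ is self-adjoint: for any $f,g\in L^2(\pi)$,
\[
\langle \widehat{P}_s f, g\rangle = \iint f(\xi)\hat p_s(\omega,\xi)g(\omega)\,d\xi\,d\omega = \iint f(\xi)\hat p_s(\xi,\omega)g(\omega)\,d\omega\,d\xi = \langle f, \widehat{P}_s g\rangle,
\]
where Fubini applies because $\hat p_s \in L^2$ and $f,g\in L^2$.

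Next, Chapman-Kolmogorov gives the semigroup identity at the operator level: for $f\in L^2(\pi)$,
\[
\widehat{P}_{s+t}f(\omega) = \int f(\omega')\hat p_{s+t}(\omega,\omega')\,d\omega' = \int f(\omega')\!\int \hat p_s(\omega,\xi)\hat p_t(\xi,\omega')\,d\xi\,d\omega' = \widehat{P}_s(\widehat{P}_t f)(\omega),
\]
again by Fubini. Setting $s=t/2$, we get $\widehat{P}_t = \widehat{P}_{t/2}\circ \widehat{P}_{t/2}$.

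Combining these, for any $f\in L^2(\pi)$,
\[
\langle \widehat{P}_t f, f\rangle = \langle \widehat{P}_{t/2}\widehat{P}_{t/2}f, f\rangle = \langle \widehat{P}_{t/2}f, \widehat{P}_{t/2}f\rangle = \|\widehat{P}_{t/2}f\|_{L^2}^2 \geq 0,
\]
which is exactly positive semidefiniteness. There is no real obstacle here; the only thing to be careful about is checking that the kernel $\hat p_{t/2}$ really does define a bounded self-adjoint operator on $L^2(\pi)$, which is where the boundedness hypothesis (giving $\hat p_{t/2}\in L^2$ via the previous lemma) is used to justify Fubini and the Hilbert-Schmidt identification.
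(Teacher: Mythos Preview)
Your proof is correct and is essentially the same as the paper's: both use Chapman--Kolmogorov to factor $\widehat{P}_t = \widehat{P}_{t/2}\circ\widehat{P}_{t/2}$ and then symmetry (self-adjointness) of $\widehat{P}_{t/2}$ to write $\langle \widehat{P}_t f,f\rangle$ as $\|\widehat{P}_{t/2}f\|^2$. The paper simply unwinds this same computation at the level of integrals rather than stating it in operator language.
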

\begin{proof}
  Let $\nu$ be any $L^2$ function.  Then
\begin{align*}
  \int\nu(\omega)\widehat{{P}_t}(\nu)(\omega)d\omega
&=\iint\nu(\omega)\nu(\xi)\hat{p}_t(\omega,\xi)d\xi\, d\omega\\
&=\iiint \nu(\omega)\nu(\xi)\hat p_{t/2}(\omega,\zeta)\hat p_{t/2}(\zeta,\xi)d\zeta\,d\xi\,d\omega\\
&=\int\left(\int\hat p_{t/2}(\omega,\zeta)\nu(\omega)d\omega\right)^2d\zeta\\
&\geq 0.
\end{align*}
\end{proof}

\begin{lemma}
  Any eigenvalue of $\widehat{P}_t$ is in the interval $[0,1]$.
\end{lemma}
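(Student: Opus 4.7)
The lower bound is immediate: the previous lemma gives $\langle \nu,\widehat{P}_t\nu\rangle\geq 0$ for every $\nu\in L^2$, so any eigenvalue $\lambda$ satisfies $\lambda\|\nu\|_{L^2}^2=\langle \nu,\widehat{P}_t\nu\rangle\geq 0$ and hence $\lambda\geq 0$. So the work lies entirely in the upper bound $\lambda\leq 1$, and the plan is to deduce this from the fact that $\widehat{P}_t$ is an $L^2$-contraction.

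To show $\|\widehat{P}_t\nu\|_{L^2}\leq \|\nu\|_{L^2}$, I would use Jensen (equivalently Cauchy--Schwarz) against the probability measure $\hat p_t(\omega,\cdot)\,d\pi$ supplied by stochasticity: for almost every $\omega$,
\[
\bigl(\widehat{P}_t\nu(\omega)\bigr)^2=\left(\int \nu(\xi)\hat p_t(\omega,\xi)\,d\xi\right)^2\leq \int \nu(\xi)^2\hat p_t(\omega,\xi)\,d\xi.
\]
Integrating in $\omega$ and then swapping the order of integration, one gets
\[
\|\widehat{P}_t\nu\|_{L^2}^2\leq \int \nu(\xi)^2\!\left(\int \hat p_t(\omega,\xi)\,d\omega\right)d\xi=\int\nu(\xi)^2\,d\xi=\|\nu\|_{L^2}^2,
\]
where the inner integral equals $1$ by symmetry of $\hat p_t$ combined with stochasticity (i.e., $\int\hat p_t(\omega,\xi)\,d\omega=\int \hat p_t(\xi,\omega)\,d\omega=1$).

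Given contractivity, if $\widehat{P}_t\nu=\lambda\nu$ for some nonzero $\nu$, then $|\lambda|\,\|\nu\|_{L^2}=\|\widehat{P}_t\nu\|_{L^2}\leq\|\nu\|_{L^2}$, forcing $\lambda\leq 1$; together with $\lambda\geq 0$ from the positive semidefiniteness this gives $\lambda\in[0,1]$.

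The only subtle point is ensuring that stochasticity is being used in the correct variable in each step: Jensen is applied with respect to the probability measure in the second argument of $\hat p_t$, while the final cancellation that yields $\|\nu\|_{L^2}^2$ uses the stochasticity of $\hat p_t$ in the first argument, which is why the symmetry hypothesis is invoked. Beyond this bookkeeping the argument is entirely routine.
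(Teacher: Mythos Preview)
Your proof is correct and uses the same essential ingredients as the paper---Cauchy--Schwarz (Jensen) together with stochasticity and symmetry of $\hat p_t$. The only difference is cosmetic: the paper bounds the quadratic form $\langle \nu,\widehat{P}_t\nu\rangle\leq\|\nu\|_{L^2}^2$ directly by applying Cauchy--Schwarz once on $\Omega^2$ with the splitting $(\nu(\omega)\sqrt{\hat p_t})\cdot(\nu(\xi)\sqrt{\hat p_t})$, whereas you first establish the (slightly stronger) $L^2$-contractivity $\|\widehat{P}_t\nu\|_{L^2}\leq\|\nu\|_{L^2}$ via pointwise Jensen and then read off the eigenvalue bound.
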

\begin{proof}
If $\nu$ is an eigenfunction with eigenvalue $\gamma$, the previous lemma tells us
\[0\leq \int\nu(\omega)\widehat{P}_t(\nu)(\omega)d\omega=\gamma||\nu||^2_{L^2},\]
so $0\leq\gamma$.

On the other hand,
\begin{align*}
  \gamma||\nu||^2_{L^2}
&=\int \nu(\omega)\widehat{P}_t(\nu)\omega d\mu\\
&=\iint \nu(\omega)\hat p_t(\xi,\omega)\nu(\xi)d\xi\,d\omega\\
&=\iint (\nu(\omega)\sqrt{\hat p_t(\xi,\omega)})(\nu(\xi)\sqrt{\hat p_t(\xi,\omega)})d\xi\,d\omega\\
&\leq\sqrt{\iint \nu^2(\omega)\hat p_t(\xi,\omega)d\xi\,d\omega\iint \nu^2(\xi)\hat p_t(\xi,\omega)d\xi\,d\omega}\\
&=||\nu||^2_{L^2},
\end{align*}
so $\gamma\leq 1$.
\end{proof}

\begin{lemma}\label{thm:close_rat}
  For each $t$, $\hat p_t(\omega,\omega')=\sum_i\lambda_i\nu_i(\omega)\nu_i(\omega')$ where the $\nu_i$ are eigenvectors forming an orthonormal basis for the support of $\widehat{P}_t$ and the $\lambda_i$ are the corresponding eigenvalues.
\end{lemma}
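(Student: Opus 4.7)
The strategy is the spectral theorem for compact self-adjoint operators together with the Fourier expansion of Hilbert--Schmidt kernels.

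First I would check that $\widehat{P}_t$ is Hilbert--Schmidt: the preceding lemma gives $\|\hat p_t\|_{L^2(\pi\times\pi)}^2=G(2t)$, which is finite by the boundedness axiom. Together with symmetry of the kernel (so $\widehat{P}_t$ is self-adjoint) and positive semi-definiteness (the preceding lemma), the spectral theorem for compact self-adjoint operators yields an at most countable orthonormal basis $\{\nu_i\}$ of $L^2(\pi)$ consisting of eigenvectors of $\widehat{P}_t$, with corresponding eigenvalues $\lambda_i\in[0,1]$ tending to $0$. Those $\nu_i$ with $\lambda_i>0$ form an orthonormal basis of $(\ker\widehat{P}_t)^\perp$, which is the support of $\widehat{P}_t$.

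Second, since $\{\nu_i\}$ is an ONB of $L^2(\pi)$, the products $\{\nu_i(\omega)\nu_j(\omega')\}_{i,j}$ form an ONB of $L^2(\pi\times\pi)$. The Fourier coefficient of $\hat p_t$ against $\nu_i(\omega)\nu_j(\omega')$ is
\[
\iint \hat p_t(\omega,\omega')\,\nu_i(\omega)\nu_j(\omega')\,d\omega\,d\omega'
= \int \nu_j(\omega')\,\widehat{P}_t(\nu_i)(\omega')\,d\omega'
= \lambda_i\,\delta_{ij},
\]
so the orthonormal expansion of $\hat p_t$ collapses to $\sum_i \lambda_i\,\nu_i(\omega)\nu_i(\omega')$, with convergence in $L^2(\pi\times\pi)$; Parseval gives the consistency check $\sum_i\lambda_i^2=G(2t)$. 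Only the terms with $\lambda_i>0$ contribute, so the sum ranges precisely over the claimed orthonormal basis for the support of $\widehat{P}_t$.

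The only genuinely nontrivial ingredient is the Hilbert--Schmidt property of $\widehat{P}_t$, which is exactly what the boundedness axiom in the definition of a pseudofinite Markov chain is designed to guarantee. Everything else is a routine manipulation in the orthonormal basis produced by the spectral theorem, so I do not anticipate any substantive obstacle beyond correctly identifying the mode of convergence as $L^2(\pi\times\pi)$ (which is all that is needed for the subsequent developments in the paper).
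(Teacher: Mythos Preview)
Your argument is correct and is the textbook Mercer/Hilbert--Schmidt route: extend the eigenvectors to a full ONB of $L^2(\pi)$, pass to the tensor-product ONB $\{\nu_i\otimes\nu_j\}$ of $L^2(\pi\times\pi)$, and read off the Fourier coefficients of $\hat p_t$ as $\lambda_i\delta_{ij}$. The paper takes a slightly different tack: having already invoked the spectral theorem to obtain the $\nu_i$, it sets $q=\hat p_t-\sum_i\lambda_i\nu_i\otimes\nu_i$ and argues by contradiction that $\|q\|_{L^2}=0$, since otherwise the Hilbert--Schmidt operator with kernel $q$ would have a nonzero eigenvector $\nu$ orthogonal to all the $\nu_i$, forcing $Q(\nu)=\widehat{P}_t(\nu)$ and hence $\nu$ to be an eigenvector of $\widehat{P}_t$ outside the span of the $\nu_i$. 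Your approach is more explicit (and gives the Parseval identity $\sum_i\lambda_i^2=G(2t)$ for free), while the paper's is a few lines shorter; both yield exactly the $L^2(\pi\times\pi)$ convergence needed downstream.
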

\begin{proof}
Let $q=\hat p_t(\omega,\omega')-\sum_{i}\lambda_i\nu_i(\omega)\nu_i(\omega')$.  If $||q||_{L^2}>0$ then the operator $Q(f)(\omega)=\int f(\xi)q(\omega,\xi)d\omega$ has an eigenvector $\nu$, and $\nu$ must be orthogonal to all the $\nu_i$.  But this means $Q(\nu)=\widehat{P}_t(\nu)$, so $\nu$ is an eigenvector of $\hat p_t$, so $Q(\nu)$ must be $0$.
\end{proof}

\begin{lemma}
If $\hat p_t(\omega,\omega')=\sum_i\lambda_i^t\nu_i(\omega)\nu_i(\omega')$ then $\hat p_{nt}(\omega,\omega')=\sum_i\lambda_i^{nt}\nu_i(\omega)\nu_i(\omega')$.
\end{lemma}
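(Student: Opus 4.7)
The plan is to prove this by induction on $n$, with the inductive step essentially a single application of Chapman–Kolmogorov together with orthonormality of the eigenvectors $\nu_i$.

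For the base case $n=1$ the statement is the hypothesis. For the inductive step, assume $\hat p_{nt}(\omega,\omega')=\sum_i\lambda_i^{nt}\nu_i(\omega)\nu_i(\omega')$ (with convergence in $L^2(\pi\times\pi)$). Apply Chapman–Kolmogorov to write
\[\hat p_{(n+1)t}(\omega,\omega')=\int \hat p_{nt}(\omega,\xi)\hat p_t(\xi,\omega')\,d\xi,\]
substitute both expansions, and use Fubini (justified by the $L^2$ convergence of the two kernels, or equivalently by viewing the integral as the composition $\widehat{P}_{nt}\circ\widehat{P}_t$ of Hilbert–Schmidt operators) to interchange the integral with the double sum. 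Then $\int \nu_i(\xi)\nu_j(\xi)\,d\xi=\delta_{ij}$ collapses the double sum to $\sum_i \lambda_i^{(n+1)t}\nu_i(\omega)\nu_i(\omega')$, as desired.

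An operator-theoretic repackaging of the same argument is also available and perhaps cleaner: the Chapman–Kolmogorov identity says $\widehat{P}_{s+u}=\widehat{P}_s\widehat{P}_u$, so $\widehat{P}_{nt}=(\widehat{P}_t)^n$. The hypothesis gives $\widehat{P}_t\nu_i=\lambda_i^t\nu_i$, hence $\widehat{P}_{nt}\nu_i=\lambda_i^{nt}\nu_i$. Thus the orthonormal family $\{\nu_i\}$ consists of eigenvectors of $\widehat{P}_{nt}$ with the claimed eigenvalues, and since these $\nu_i$ span the support of $\widehat{P}_t$ (and hence of any power), Lemma \ref{thm:close_rat} applied at time $nt$ yields the stated kernel expansion.

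The only mild subtlety is the interchange of the infinite sum with the integral over $\xi$; this is not a genuine obstacle, since the expansion of Lemma \ref{thm:close_rat} converges in $L^2(\pi\times\pi)$ and the action of composition on Hilbert–Schmidt kernels is continuous in that norm, so we may pass to the limit on the partial sums and obtain the claimed identity almost everywhere.
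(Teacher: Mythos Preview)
Your proof is correct and follows essentially the same approach as the paper: induction on $n$, using Chapman--Kolmogorov to write $\hat p_{(n+1)t}$ as an integral, substituting the two eigenvector expansions, and collapsing the double sum via orthonormality. You add a bit more care about justifying the interchange of sum and integral (and an alternative operator-theoretic viewpoint), which the paper's proof simply performs without comment.
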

\begin{proof}
  By induction on $n$.  We have
  \begin{align*}
    \hat p_{(n+1)t}(\omega,\omega')
&=\int \hat p_{nt}(\omega,\xi)\hat p_t(\xi,\omega')d\xi\\
&=\int \sum_i\lambda_i^{nt}\nu_i(\omega)\nu_i(\xi)\sum_j\lambda_j^{t}\nu_j(\xi)\nu_j(\omega')d\xi\\
&=\sum_{i,j}\lambda_i^{nt}\lambda_j^t\nu_i(\omega)\nu_j(\omega')\int \nu_i(\xi)\nu_j(\xi)d\xi\\
&=\sum_{i}\lambda_i^{(n+1)t}\nu_i(\omega)\nu_i(\omega')\\
  \end{align*}
using the fact that $\int \nu_i(\xi)\nu_j(\xi)d\xi=1$ if $i=j$ and $0$ otherwise.
\end{proof}

This ensures that the eigenvectors decompositions for $t$ and $qt$ agree when $q$ is rational.  Continuity then gives us the same statement for all $t$.

\begin{lemma}\label{thm:close_all}
  For every $t>0$, $\lim_{s\rightarrow t}||\sum_i\lambda_i^t\nu(\omega)\nu(\omega')-\sum_i\lambda_i^s\nu(\omega)\nu(\omega')||_{L^2}=0$.
\end{lemma}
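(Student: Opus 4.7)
The plan is to reduce the $L^2$ norm of the difference to a scalar series in the eigenvalues $\lambda_i$ and then apply dominated convergence. Because $\{\nu_i\}$ is orthonormal in $L^2(\pi)$, the rank-one tensors $\{\nu_i(\omega)\nu_i(\omega')\}_i$ are orthonormal in $L^2(\pi\times\pi)$, so
\[\left\| \sum_i (\lambda_i^t-\lambda_i^s)\nu_i(\omega)\nu_i(\omega') \right\|_{L^2}^2 = \sum_i (\lambda_i^t-\lambda_i^s)^2,\]
and it suffices to show this series tends to $0$ as $s\to t$.

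Each term vanishes individually in the limit: since $\lambda_i\in[0,1]$ by the eigenvalue bound, $r\mapsto \lambda_i^r$ is continuous. To exchange the limit with the sum I will produce a summable dominator valid on a neighborhood of $t$. Fix a rational $r\in(0,t/2)$ and restrict attention to $s\in(t-r,t+r)$. Since $a\mapsto \lambda_i^a$ is decreasing on $[0,\infty)$ for $\lambda_i\in[0,1]$, and $2s,2t\ge 2r$,
\[(\lambda_i^t-\lambda_i^s)^2 \le 2\lambda_i^{2t}+2\lambda_i^{2s} \le 4\lambda_i^{2r}.\]

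The main obstacle is certifying that $\sum_i \lambda_i^{2r}$ is finite; this is precisely where the rationality of $r$ is essential, since a priori we only have the formula $\hat p_\tau=\sum_i \lambda_i^\tau \nu_i(\omega)\nu_i(\omega')$ for $\tau$ a rational multiple of the fixed base time. The discussion preceding the lemma gives exactly this decomposition at the rational time $r$, so by the earlier identity $\|\hat p_r\|_{L^2}^2=G(2r)$ combined with the orthonormality of the $\nu_i\otimes\nu_i$ we obtain
\[\sum_i \lambda_i^{2r} = \|\hat p_r\|_{L^2}^2 = G(2r) < \infty\]
from the boundedness assumption on $G$. Dominated convergence for series then completes the argument.
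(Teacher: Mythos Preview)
Your proof is correct and, in my view, cleaner than the paper's. Both arguments rest on the same underlying fact---that $\sum_i \lambda_i^{a}<\infty$ for every $a>0$, which at this stage of the paper is only available via the decomposition $\hat p_r=\sum_i\lambda_i^r\nu_i\otimes\nu_i$ at rational $r$ together with boundedness of $G$---but they organize the estimate differently. The paper does an explicit head--tail split: it fixes $k$ so that $\sum_{i>k}\lambda_i^{t/2}$ is small, then uses the $\ell^p$-monotonicity inequality $(\sum_{i>k}\lambda_i^s)^{1/s}\le(\sum_{i>k}\lambda_i^{t/2})^{2/t}$ to control the tail uniformly for all $s>t/2$, and handles the finitely many head terms by continuity of $s\mapsto\lambda_i^s$. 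You instead use orthonormality of the $\nu_i\otimes\nu_i$ to collapse the $L^2$ norm to the scalar series $\sum_i(\lambda_i^t-\lambda_i^s)^2$, and then invoke dominated convergence with the explicit majorant $4\lambda_i^{2r}$. Your route avoids the somewhat ad hoc $\ell^p$ inequality and makes the role of $G(2r)<\infty$ completely transparent; the paper's route is more hands-on but is really just an unpacking of the dominated-convergence argument. Your care in choosing $r$ rational (so that the identity $\sum_i\lambda_i^{2r}=\|\hat p_r\|_{L^2}^2=G(2r)$ is already justified at this point) is exactly right.
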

\begin{proof}
  Let $t>0$ and $\epsilon>0$ sufficiently small be given.  Choose $k$ large enough that $\sum_{i>k}\lambda_i^{t/2}<\epsilon/3$.  When $s>t/2$, we have
\[(\sum_{i>k}\lambda_i^s)^{1/s}\leq (\sum_{i>k}\lambda_i^{t/2})^{2/t}<(\epsilon/3)^{2/t}\]
and so $\sum_{i>k}\lambda_i^s<(\epsilon/3)^{2s/t}<\epsilon/3$.
  Additionally, when $s$ is close to $t$, for each $i\leq k$ we have $1-\lambda_i^{s-t}<\epsilon/3k$.  Then
\begin{align*}
||\sum_i\lambda_i^t\nu(\omega)\nu(\omega')-\sum_i\lambda_i^s\nu(\omega)\nu(\omega')||_{L^2}
&\leq ||\sum_{i\leq k}\lambda_i^t\nu_i(\omega)\nu_i(\omega')-\sum_{i\leq k}\lambda_i^s\nu_i(\omega)\nu_i(\omega')||_{L^2}\\
&\ \ \ \ +||\sum_{i>k}\lambda_i^t\nu_i(\omega)\nu_i(\omega')||_{L^2}\\
&\ \ \ \ +||\sum_{i>k}\lambda_i^s\nu_i(\omega)\nu_i(\omega')||_{L^2}\\
&\leq ||\sum_{i\leq k}\lambda_i^t(1-\lambda_i^{s-t})\nu_i(\omega)\nu_i(\omega')||_{L^2}+\epsilon/3+\epsilon/3\\
&\leq\sum_{0<i\leq k}\frac{\epsilon}{2k}\lambda_i^t||\nu_i(\omega)\nu_i(\omega')||_{L^2}+\epsilon/3\\
&\leq\epsilon.
\end{align*}
\end{proof}

In particular, taking $\hat p_1(\omega,\omega')=\sum_i\lambda_i\nu_i(\omega)\nu_i(\omega')$, we have
\[||\hat p_t(\omega,\omega')-\sum_i\lambda_i^t\nu(\omega)\nu(\omega')||_{L^2}=0\]
for all $t$: for rational $t$ this follows from Lemma \ref{thm:close_rat}, and then for arbitrary $t$ this follows because for every $\epsilon>0$ we have
\[  ||\hat p_t(\omega,\omega')-\sum_i\lambda_i^t\nu(\omega)\nu(\omega')||_{L^2}
\leq||\hat p_t(\omega,\omega')-\hat p_s(\omega,\omega')||_{L^2}+||\sum_i\lambda_i^t\nu(\omega)\nu(\omega')-\sum_i\lambda_i^s\nu(\omega)\nu(\omega')||_{L^2},\]
which can be made arbitrarily small by choosing $s$ to be a rational number near $t$.

\subsection{Exchangeable Arrays}

Our third notion discards the explicit description of a Markov chain to focus on the statistical properties of the densities. 

\begin{definition}
  Let $(\Omega,\mathcal{B},\pi),\hat p_t$ be a pseudofinite Markov chain.  The \emph{density array corresponding to $(\Omega,\mathcal{B},\pi),\hat p_t$} is the collection of random variables $(\mathbf{X}_{i,j}(t))_{i,j\in\mathbb{N}}$ obtained by selecting an i.i.d. random sequence $(\omega_i)_{i\in\mathbb{N}}$ from $\Omega$ according to $\pi$ and setting $\mathbf{X}_{i,j}(t)=\hat p_t(\omega_i,\omega_j)$.
\end{definition}

Since the $\omega_i$ are i.i.d., it is easy to see that these random variables are partially exchangeable and dissociated:
\begin{definition}
  An array of random variables $(\mathbf{X}_{i,j})_{i,j\in\mathbb{N}}$ is \emph{partially exchangeable} if whenever $\sigma:[0,n]\rightarrow[0,n]$ is a permutation, the joint distribution of $(\mathbf{X}_{i,j})_{i,j\in[0,n]}$ is identical to the joint distribution of $(\mathbf{X}_{\sigma(i),\sigma(j)})_{i,j\in[0,n]}$.

  An array is \emph{dissociated} if whenever $S$ and $T$ are disjoint, $(\mathbf{X}_{i,j})_{i,j\in S}$ is independent of $(\mathbf{X}_{i,j})_{i,j\in T}$.
\end{definition}

We wish to identify those arrays of random variables which can be obtained in this way.  Unsurprisingly, most properties amount to translations of the corresponding properties of a pseudofinite Markov chain.  Some of these properties (particularly the Chapman-Kolmogorov property) are awkward to express directly as a property of an array of random variables; the definition is justified by Theorem \ref{thm:pseudo_to_array} below, which shows how each property relates to the corresponding property of a pseudofinite Markov chain.

\begin{definition}
A \emph{Markov chain density array} is an array $(\mathbf{X}_{i,j})_{i,j\in\mathbb{N}}$ of $\left(\mathbb{R}^{>0}\right)^{\mathbb{R}^{>0}}$-valued random variables such that:
  \begin{itemize}
  \item (Exchangeability) The array is partially exchangeable,
  \item (Dissociated) The array is dissociated,
  \item (Independence) For every $n$, $(\mathbf{X}_{i,j})_{i,j<n}$ is independent of $(\mathbf{X}_{i,j})_{i,j\geq n}$,
  \item (Stochasticity) For every $t>0$, $\mathbb{E}(2\mathbf{X}_{0,1}(t)-\mathbf{X}_{0,1}(t)\mathbf{X}_{0,2}(t))=1$,
  \item (Symmetry) For every $t$, with probability $1$, $\mathbf{X}_{0,1}(t)=\mathbf{X}_{1,0}(t)$,
  \item (Chapman-Kolmogorov) For every $s,t>0$,
\[\mathbb{E}(\mathbf{X}_{0,1}(s+t)^2-2\mathbf{X}_{0,1}(s+t)\mathbf{X}_{0,2}(s)\mathbf{X}_{2,1}(t)+\mathbf{X}_{0,2}(s)\mathbf{X}_{0,3}(s)\mathbf{X}_{2,1}(t)\mathbf{X}_{3,1}(t))=0,\]
\item (Diagonal Chapman-Kolmogorov) For every $s,t>0$,
\[\mathbb{E}(\mathbf{X}_{0,0}(s+t)^2-2\mathbf{X}_{0,0}(s+t)\mathbf{X}_{0,2}(s)\mathbf{X}_{2,0}(t)+\mathbf{X}_{0,2}(s)\mathbf{X}_{0,3}(s)\mathbf{X}_{2,0}(t)\mathbf{X}_{3,0}(t))=0,\]
\item (Boundedness) $\mathbb{E}(\mathbf{X}_{0,0}(t))$ is finite for all $t$,
\item (Normality) $\mathbb{E}(\mathbf{X}_{0,0}(1))=2$,
  \item (Continuity) For every $t>0$, $\lim_{s\rightarrow t}\mathbb{E}(\left(\mathbf{X}_{0,1}(t)-\mathbf{X}_{0,1}(s)\right)^2)=0$.
  \end{itemize}

  If for each $n$, $(\mathbf{X}_{(n),i,j})_{i,j\in\mathbb{N}}$ is a Markov chain density array, we say $(\mathbf{X}_{(n),i,j})$ \emph{converges in distribution} to $(\mathbf{X}_{*,i,j})_{i,j\in\mathbb{N}}$ if for every $k$, the finite matrix of random variables $(\mathbf{X}_{(n),i,j})_{i,j\leq k}$ converges in distribution to $(\mathbf{X}_{*,i,j})_{i,j\leq k}$.


\end{definition}

\begin{theorem}\label{thm:pseudo_to_array}
  Let $(\Omega,\mathcal{B},\pi), \hat p_t$ be a pseudofinite Markov chain.  The density array corresponding to $(\Omega,\mathcal{B},\pi), \hat p_t$ is a Markov chain density array.
\end{theorem}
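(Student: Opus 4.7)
The verification is almost entirely a matter of rewriting each array-theoretic expression as an integral against $\pi^{\otimes k}$ for an appropriate $k$, using that the sampled points $\omega_0,\omega_1,\ldots$ are i.i.d.\ from $\pi$. Exchangeability, the dissociation property, and the independence property are immediate consequences of this i.i.d.\ sampling, since the array entry $\mathbf{X}_{i,j}(t)$ depends only on $\omega_i$ and $\omega_j$. Symmetry is immediate from symmetry of $\hat p_t$, and normality plus boundedness reduce to
\[\mathbb{E}(\mathbf{X}_{0,0}(t))=\int \hat p_t(\omega,\omega)\,d\omega=G(t),\]
which is finite for every $t$ and equals $2$ at $t=1$. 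Continuity reduces, after expanding the square and using Fubini, to $\mathbb{E}((\mathbf{X}_{0,1}(t)-\mathbf{X}_{0,1}(s))^2)=\|\hat p_t-\hat p_s\|_{L^2}^2$, and the $L^2$-continuity of $t\mapsto \hat p_t$ is part of the pseudofinite hypothesis.

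The less obvious properties are stochasticity and the two Chapman--Kolmogorov identities, and here the arithmetic is arranged so that each algebraic expression is secretly the $L^2$-norm squared of a difference. For stochasticity, I would compute
\[\mathbb{E}(2\mathbf{X}_{0,1}(t)-\mathbf{X}_{0,1}(t)\mathbf{X}_{0,2}(t))=\int\Big(2\int \hat p_t(\omega,\omega')\,d\omega'-\Big(\int \hat p_t(\omega,\omega')\,d\omega'\Big)^2\Big)d\omega,\]
and observe that the inner integral equals $1$ almost surely by the stochasticity of $\hat p_t$, so the outer integral gives $2-1=1$. For the Chapman--Kolmogorov property, set $f(\omega,\omega')=\hat p_{s+t}(\omega,\omega')$ and $g(\omega,\omega')=\int \hat p_s(\omega,\xi)\hat p_t(\xi,\omega')\,d\xi$. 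Unpacking each term as an iterated integral and applying Fubini, I get
\[\mathbb{E}(\mathbf{X}_{0,1}(s+t)^2)=\|f\|_{L^2}^2,\qquad \mathbb{E}(\mathbf{X}_{0,1}(s+t)\mathbf{X}_{0,2}(s)\mathbf{X}_{2,1}(t))=\langle f,g\rangle_{L^2},\]
\[\mathbb{E}(\mathbf{X}_{0,2}(s)\mathbf{X}_{0,3}(s)\mathbf{X}_{2,1}(t)\mathbf{X}_{3,1}(t))=\|g\|_{L^2}^2,\]
where the last equality uses that integrating out $\omega_2$ and $\omega_3$ separately produces two copies of $g(\omega_0,\omega_1)$. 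The specified combination is thus $\|f-g\|_{L^2}^2$, which vanishes by Chapman--Kolmogorov for $\hat p_t$. The diagonal version is the same computation with $\omega_1$ replaced by $\omega_0$ throughout, invoking the diagonal Chapman--Kolmogorov hypothesis.

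\textbf{Main obstacle.} There is no real obstacle: once one notices that each Chapman--Kolmogorov expression in the definition is precisely $\|f-g\|_{L^2}^2$ for the relevant $f,g$, every property drops out of Fubini applied to the product measure of the i.i.d.\ sample. The only small care needed is to keep the index bookkeeping straight in the four-variable diagonal calculation and to confirm that the quantities under the expectation are integrable, which follows from boundedness together with the factorization of iterated integrals into products of one-variable marginals.
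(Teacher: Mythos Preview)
Your proposal is correct and follows essentially the same route as the paper: both reduce each array property to an integral against $\pi^{\otimes k}$ via i.i.d.\ sampling, and in particular both recognize that the Chapman--Kolmogorov expressions are exactly $\|f-g\|_{L^2}^2$ for $f=\hat p_{s+t}$ and $g(\omega,\omega')=\int \hat p_s(\omega,\xi)\hat p_t(\xi,\omega')\,d\xi$. The only cosmetic difference is in stochasticity, where the paper writes the expression minus $1$ as $-\int\big(\int \hat p_t(\omega,\omega')\,d\omega'-1\big)^2 d\omega$ rather than directly substituting $\int \hat p_t(\omega,\omega')\,d\omega'=1$; the two computations are equivalent.
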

\begin{proof}
  As noted above, exchangeability and dissociation follows immediately from the fact that the $\omega_i$ are chosen i.i.d..  Independence follows as well.

In general, suppose we take any function depending on finitely many values of the form $\mathbf{X}_{i,j}(t)$---that is,
\[f(\mathbf{X}_{i_0,j_0}(t_0),\ldots,\mathbf{X}_{i_m,j_m}(t_m))\]
with $i_k,j_k\leq n$ for each $k\leq m$.  Then the expected value
\[\mathbb{E}(f(\mathbf{X}_{i_0,j_0}(t_0),\ldots,\mathbf{X}_{i_m,j_m}(t_m)))\]
is the average result of selecting $\omega_0,\ldots,\omega_n$ and calculating
\[f(\hat p_{t_0}(\omega_{i_0},\omega_{j_0}),\ldots,\hat p_{t_m}(\omega_{i_m},\omega_{j_m})).\]
That is,
\[\mathbb{E}(f(\mathbf{X}_{i_0,j_0}(t_0),\ldots,\mathbf{X}_{i_m,j_m}(t_m)))=\int\cdots\int f(\hat p_{t_0}(\omega_{i_0},\omega_{j_0}),\ldots,\hat p_{t_m}(\omega_{i_m},\xi_{j_m}))d\omega_0\cdots d\omega_n.\]

All other properties of a Markov chain density array follow by applying this for suitable choices of $f$.

Stochasticity holds since 
\begin{align*}
\mathbb{E}(2\mathbf{X}_{0,1}(t)-\mathbf{X}_{0,1}(t)\mathbf{X}_{0,2}(t))-1
&=\iiint 2\hat p_t(\omega_0,\omega_1)-\hat p_t(\omega_0,\omega_1)\hat p_t(\omega_0,\omega_2) - 1 d\omega_0 d\omega_1 d\omega_2\\
&=-\int \left[\int \hat p_t(\omega_0,\omega_1)d\omega_1-1\right]^2 d\omega_0\\
&=0.
\end{align*}

Symmetry holds since
\[\mathbb{E}((\mathbf{X}_{1,0}(t)-\mathbf{X}_{0,1}(t))^2)=\iint (\hat p_t(\omega,\omega')-\hat p_t(\omega',\omega))^2d\omega d\omega'=0\]
by the symmetry of $\hat p_t$.

Chapman-Kolmogorov holds since
\begin{align*}
  &\mathbb{E}(\mathbf{X}_{0,1}(s+t)^2-2\mathbf{X}_{0,1}(s+t)\mathbf{X}_{0,2}(s)\mathbf{X}_{2,1}(t)+\mathbf{X}_{0,2}(s)\mathbf{X}_{0,3}(s)\mathbf{X}_{2,1}(t)\mathbf{X}_{3,1}(t))\\
=&\iiiint \hat p_{s+t}(\omega_0,\omega_1)^2-2\hat p_{s+t}(\omega_0,\omega_1)\hat p_s(\omega_0,\omega_2)\hat p_t(\omega_2,\omega_1)\\
&\ \ \ \ \ \ \ \ +\hat p_s(\omega_0,\omega_2)\hat p_s(\omega_0,\omega_3)\hat p_t(\omega_2,\omega_1)\hat p_t(\omega_3,\omega_1)d\omega_0d\omega_1d\omega_2d\omega_3\\
=&\iint \hat p_{s+t}(\omega_0,\omega_1)^2-2\hat p_{s+t}(\omega_0,\omega_1)\int \hat p_s(\omega_0,\omega_2)\hat p_t(\omega_2,\omega_1)d\omega_2\\
&\ \ \ \ \ \ \ \ +\left[\int\hat p_s(\omega_0,\omega_2)\hat p_t(\omega_2,\omega_1)d\omega_2\right]^2d\omega_0d\omega_1\\
=&\iint \left[\hat p_{s+t}(\omega_0,\omega_1)-\int \hat p_s(\omega_0,\omega_2)\hat p_t(\omega_2,\omega_1)d\omega_2\right]^2d\omega_0d\omega_1\\
=&0
\end{align*}
by the Chapman-Kolmogorov property of $\hat p_t$.

Diagonal Chapman-Kolmogorov holds by the same argument, replacing $\omega_1$ with $\omega_0$.


Boundedness holds since
\[\mathbb{E}(\mathbf{X}_{0,0}(t))=\int \hat p_t(\omega,\omega)d\omega\]
which is finite by the boundedness of $\hat p_t$.  When $t=1$, this expectation is $2$ by the normality of $\hat p_t$.

For any $t>0$ and any $s$,
\[\mathbb{E}((\mathbf{X}_{0,1}(t)-\mathbf{X}_{0,1}(s))^2)=\iint (\hat p_t(\omega,\omega')-\hat p_s(\omega,\omega'))^2d\omega d\omega'=||\hat p_t-\hat p_s||_{L^2}^2.\]
Since the right side approaches $0$ as $s$ approaches $t$, the left side does as well.
\end{proof}

\section{Scaling Finite Markov Chains}

To compare finite Markov chains to density arrays, we want to first rescale according to the stationary distribution.

\begin{definition}
  Let $\Omega,\mathbf{Q}$ be a reversible, irreducible, finite Markov chain with stationary distribution $\pi$.  We define the \emph{scaled transition rate} and \emph{probability density} to be
\[\widehat{\mathbf{Q}}(\omega,\omega')=\frac{\mathbf{Q}(\omega,\omega')}{\pi(\omega')}\text{ and }\widehat{\mathbf{P}}_t(\omega,\omega')=\frac{\mathbf{P}_t(\omega,\omega')}{\pi(\omega')}.\]
\end{definition}

\begin{theorem}\label{thm:pseudo_imp_dist}
  If $\pi$ is the stationary distribution on a reversible, irreducible, finite Markov chain $\Omega,\mathbf{Q}$ with $G(1)=2$ then $(\Omega,\mathcal{P}(\Omega),\pi),\widehat{\mathbf{P}}_t$ is a pseudofinite Markov chain.
\end{theorem}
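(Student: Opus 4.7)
The plan is to verify each of the seven defining clauses of a pseudofinite Markov chain for the data $(\Omega,\mathcal{P}(\Omega),\pi)$ together with $\widehat{\mathbf{P}}_t$. Since $\Omega$ is finite, every integral against $\pi$ is literally the finite sum $\sum_\omega \pi(\omega) f(\omega)$, so each clause reduces to an algebraic identity about the matrix $\mathbf{P}_t$ and the weights $\pi(\omega)$. All of these identities will come from three elementary inputs: the rows of $\mathbf{P}_t = e^{t\mathbf{Q}}$ sum to $1$; detailed balance $\pi(\omega)\mathbf{P}_t(\omega,\omega') = \pi(\omega')\mathbf{P}_t(\omega',\omega)$, which is inherited from the $\mathbf{Q}$-level reversibility by taking the matrix exponential; and the semigroup identity $\mathbf{P}_{s+t} = \mathbf{P}_s\mathbf{P}_t$.

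Stochasticity follows immediately from
\[\int \widehat{\mathbf{P}}_t(\omega,\omega')\,d\omega' = \sum_{\omega'} \pi(\omega')\cdot \frac{\mathbf{P}_t(\omega,\omega')}{\pi(\omega')} = \sum_{\omega'}\mathbf{P}_t(\omega,\omega') = 1.\]
Symmetry is obtained by dividing the detailed balance identity through by $\pi(\omega)\pi(\omega')$. For Chapman-Kolmogorov, rewrite the semigroup identity by inserting a factor $\pi(\xi)/\pi(\xi)$ inside the sum and dividing through by $\pi(\omega')$, which gives
\[\widehat{\mathbf{P}}_{s+t}(\omega,\omega') = \sum_\xi \pi(\xi)\widehat{\mathbf{P}}_s(\omega,\xi)\widehat{\mathbf{P}}_t(\xi,\omega') = \int \widehat{\mathbf{P}}_s(\omega,\xi)\widehat{\mathbf{P}}_t(\xi,\omega')\,d\xi;\]
setting $\omega' = \omega$ gives the diagonal version for free.

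For boundedness, a direct computation gives $\int \widehat{\mathbf{P}}_t(\omega,\omega)\,d\omega = \sum_\omega \mathbf{P}_t(\omega,\omega) = G(t)$, which is a finite sum of nonnegative numbers; normality is then just the assumption $G(1)=2$. Continuity in $L^2(\pi\times\pi)$ follows because $t\mapsto e^{t\mathbf{Q}}$ is analytic as a map into the finite-dimensional space of $\Omega\times\Omega$ matrices, so it is continuous in every norm.

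There is no real obstacle here: the definition of a pseudofinite Markov chain was set up precisely to abstract the scaled finite setting, and the scaling $\widehat{\mathbf{P}}_t = \mathbf{P}_t/\pi$ is exactly what turns transition probabilities into Radon-Nikodym densities with respect to $\pi$. The only point requiring care is keeping track of which coordinate of $\widehat{\mathbf{P}}_t$ carries the $1/\pi$ factor, which is what forces the use of reversibility in the symmetry check and makes the $\pi(\xi)/\pi(\xi)$ insertion natural in Chapman-Kolmogorov.
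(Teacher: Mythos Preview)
Your proposal is correct and follows essentially the same route as the paper: verify each clause of the definition by unwinding the integral against $\pi$ into a finite sum and invoking row-stochasticity, detailed balance, and the semigroup property of $\mathbf{P}_t$. The only cosmetic difference is in the continuity clause, where the paper computes an explicit bound $\|\widehat{\mathbf{P}}_t-\widehat{\mathbf{P}}_s\|_{L^2(\pi)}\le |t-s|\,\|\mathbf{Q}\|\,e^{2s\|\mathbf{Q}\|}$ while you appeal to analyticity of $t\mapsto e^{t\mathbf{Q}}$ and equivalence of norms in finite dimensions; these are the same idea at different levels of detail.
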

\begin{proof}
  \begin{itemize}
  \item (Stochasticity) Remembering that all integrals are with respect to $\pi$, for any $\omega$
\[\int\widehat{\mathbf{P}}_t(\omega,\omega')d\omega'
=\sum_{\omega'}\frac{\mathbf{P}_t(\omega,\omega')}{\pi(\omega')}\pi(\omega')
=\sum_{\omega'}\mathbf{P}_t(\omega,\omega')
=1
\]
by the stochasticity of $\mathbf{P}_t$.
\item (Symmetry) Since $\pi$ is a stationary distribution, we have
\[\widehat{\mathbf{P}}_t(\omega,\omega')=\frac{\mathbf{P}_t(\omega,\omega')}{\pi(\omega')}=\frac{\mathbf{P}_t(\omega',\omega)}{\pi(\omega)}=\widehat{\mathbf{P}}_t(\omega',\omega)\]
using the reversibility the original Markov chain.
\item (Chapman-Kolmogorov, both forms) 
\begin{align*}
\widehat{\mathbf{P}}_{s+t}(\omega,\omega')
&=\frac{\mathbf{P}_{s+t}(\omega,\omega')}{\pi(\omega')}\\
&=\frac{\sum_\xi\mathbf{P}_s(\omega,\xi)\mathbf{P}_t(\xi,\omega')}{\pi(\omega')}\\
&=\sum_\xi\frac{\mathbf{P}_s(\omega,\xi)\mathbf{P}_t(\xi,\omega')}{\pi(\xi)\pi(\omega')}\pi(\xi)\\
&=\int\widehat{\mathbf{P}}_s(\omega,\xi)\widehat{\mathbf{P}}_t(\xi,\omega')d\xi.
\end{align*}
\item (Boundedness) Trivial since $\int\widehat{\mathbf{P}}_t(\omega,\omega)d\omega$ is a finite sum in this case.
\item (Normality) By assumption we have
\[G(1)=\sum_\omega\mathbf{P}_1(\omega,\omega)=\sum_\omega\frac{\pi(\omega)}{\pi(\omega)}\mathbf{P}_1(\omega,\omega)
=\sum_\omega\widehat{\mathbf{P}}_1(\omega,\omega)\pi(\omega)=\int\widehat{\mathbf{P}}_1(\omega,\omega)d\omega=2.\]
\item (Continuity) Observe that
  \begin{align*}
    ||\widehat{\mathbf{P}}_t||_{L^2(\pi)}^2
=\sum_{\omega,\omega'}(\widehat{\mathbf{P}}_t(\omega,\omega'))^2\pi(\omega)\pi(\omega')\\ 
&=\sum_{\omega,\omega'}(\mathbf{P}_t(\omega,\omega'))^2\frac{\pi(\omega)}{\pi(\omega')}\\
&=\sqrt{\sum_{\omega,\omega'}(\mathbf{P}_t(\omega,\omega'))^2\frac{\pi(\omega)}{\pi(\omega')}}^2.
 \end{align*}
So $||\widehat{\mathbf{P}}_t||_{L^2(\pi)}$ is a matrix norm.  Therefore
  \begin{align*}
    ||\widehat{\mathbf{P}}_t-\widehat{\mathbf{P}}_s||_{L^2(\pi)}^2
&=||\mathbf{P}_t(\omega,\omega')-\mathbf{P}_s(\omega,\omega')||^2\\
&=||e^{(s+(t-s))\mathbf{Q}}-e^{s\mathbf{Q}}||^2\\
&\leq(t-s)^2||\mathbf{Q}||^2e^{4s||\mathbf{Q}||}.
  \end{align*}
This approaches $0$ as $t$ approaches $s$.
  \end{itemize}
\end{proof}

\begin{definition}
  If $\Omega,\mathbf{Q}$ is a reversible, irreducible, finite Markov chain with stationary distribution $\pi$ we define the density array corresponding to $\Omega,\mathbf{Q}$ to be the density array corresponding to $(\Omega,\mathcal{P}(\Omega),\pi),\widehat{\mathbf{P}}_t$.
\end{definition}

\subsection{Statement of Main Results}

We are now prepared to state our main results:
\begin{theorem}\label{thm:main}
  Suppose $\Omega_{(n)},\mathbf{Q}_{(n)}$ is a bounded sequence of Markov chains such that the corresponding density arrays $(\mathbf{X}_{(n),i,j}(t))_{i,j\in\mathbb{N}}$ converge in distribution.  Then there is a pseudofinite Markov chain $\Omega,\hat p_t$ such that the associated Markov chain $(\mathbf{X}_{*,i,j}(t))_{i,j\in\mathbb{N}}$ is the limit in distribution of the sequence $(\mathbf{X}_{(n),i,j}(t))_{i,j\in\mathbb{N}}$.
\end{theorem}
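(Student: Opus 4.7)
\emph{Plan.} I would construct the pseudofinite Markov chain as a Loeb-ultraproduct of the finite chains, using the spectral decomposition developed in Section 2.3 (applied at the finite level) to produce a measurable kernel despite the finite normalized kernels potentially being unbounded pointwise.

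Fix a non-principal ultrafilter $\mathcal{U}$ on $\mathbb{N}$, and let $(\Omega, \mathcal{B}, \pi)$ be the Loeb probability space associated to the internal ultraproduct of the finite spaces $(\Omega_{(n)}, \mathcal{P}(\Omega_{(n)}), \pi_{(n)})$. The goal is to produce, for each $t > 0$, a measurable $\hat p_t : \Omega^2 \to \mathbb{R}^{\geq 0}$ such that multi-fold Loeb integrals of products of $\hat p_t$ at various times and arguments coincide with the $\mathcal{U}$-ultralimits of the corresponding finite integrals for $\widehat{\mathbf{P}}_{(n),t}$. A direct definition as an internal pointwise ultraproduct $\hat p_t = \lim_{\mathcal{U}} \widehat{\mathbf{P}}_{(n),t}$ does not descend to the Loeb space, because $\widehat{\mathbf{P}}_{(n),t}$ need not be $S$-bounded pointwise.

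The fix is to apply the spectral apparatus already developed in Section 2.3 at each finite level: write
\[ \widehat{\mathbf{P}}_{(n),t}(\omega,\omega') = \sum_i \lambda_{(n),i}^t\, \nu_{(n),i}(\omega)\nu_{(n),i}(\omega'), \]
with $\lambda_{(n),i} \in [0,1]$ ordered decreasingly and orthonormal $\nu_{(n),i}$. Normality gives the uniform trace bound $\sum_i \lambda_{(n),i} = G_{(n)}(1) = 2$. For each fixed $i$ set $\lambda_i = \lim_{n \to \mathcal{U}} \lambda_{(n),i} \in [0,1]$ and let $\nu_i \in L^2(\pi)$ be the Loeb eigenvector arising from the internal ultraproduct of $(\nu_{(n),i})_n$ (each of unit $L^2$-norm), then define
\[ \hat p_t(\omega,\omega') := \sum_i \lambda_i^t\, \nu_i(\omega)\nu_i(\omega'). \]
The trace bound $\sum_i \lambda_i \leq 2$ and the tail control already used in Lemma \ref{thm:close_all} show that this series converges in $L^2(\pi \times \pi)$ and is $L^2$-continuous in $t$. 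The pseudofinite axioms then transfer by standard means: stochasticity, symmetry, both Chapman-Kolmogorov identities, boundedness, and normality are all integral identities in low-degree polynomials of the kernel which hold at the finite level by Theorem \ref{thm:pseudo_imp_dist}; uniform $L^2$ control lets the $\mathcal{U}$-ultralimit commute with the Loeb integral, so the identities pass to $\hat p_t$.

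To match the density arrays, for any $k, m$ and bounded continuous $F$ of $(\mathbf{X}_{i,j}(t_\ell))_{i,j \leq k,\,\ell \leq m}$, the expectation under the density array of $(\Omega, \hat p_t)$ is a $(k+1)$-fold Loeb integral which by construction equals the $\mathcal{U}$-ultralimit of the corresponding finite expectation; by the convergence-in-distribution hypothesis these converge to $\mathbb{E} F((\mathbf{X}_{*,i,j}(t_\ell)))$, so the two agree on a determining family of test functions. \emph{The principal obstacle} is defining $\hat p_t$ itself: since $\widehat{\mathbf{P}}_{(n),t}$ lacks uniform pointwise bounds, the naive ultraproduct fails, and one must convert the kernel into a sum of low-rank pieces controlled by the uniform trace bound. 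This is exactly the simplification that the discrete spectrum affords and that the introduction flags as the source of comparative ease over the graph case. A secondary technical matter is ensuring that the $L^2$-ultralimits of the eigenvectors descend to genuine Loeb-measurable functions rather than mere internal $L^2$ classes; this is handled by verifying $S$-integrability of $\nu_{(n),i}^2$ using the uniform $L^2$ bound on $\widehat{\mathbf{P}}_{(n),t/2}$.
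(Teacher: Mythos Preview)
Your proposal is correct and follows essentially the same approach as the paper: pass to the spectral decomposition at each finite level, take ultralimits of the eigenvalues $\lambda_{(n),i}$ and the unit-norm eigenvectors $\nu_{(n),i}$ via the ultraproduct machinery of Section~\ref{sec:ultraproducts}, define $\hat p_t=\sum_i\lambda_i^t\nu_i\otimes\nu_i$, and then verify the pseudofinite axioms and the convergence in distribution by truncating to finitely many eigenvectors and using the uniform trace bound $G_{(n)}(t)\leq B_t$ to control the tail. The one place where the paper invests more effort than your sketch suggests is the final convergence-in-distribution step: since $\hat p_t$ is \emph{not} the pointwise ultralimit of $\widehat{\mathbf{P}}_{(n),t}$, the claim that multi-fold Loeb integrals of $\hat p_t$ agree with $\mathcal{U}$-limits of the finite integrals is not ``by construction'' but requires the truncation argument (the paper's $\mathbf{Y}_{*,k}$) together with Lemma~\ref{thm:convergence_in_dist}.
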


\begin{theorem}\label{thm:main2}
  Let $(\mathbf{X}_{*,i,j})$ be a density array.  There is a bounded sequence of finite Markov chains $\Omega_{(n)},\mathbf{Q}_{(n)}$ such that, taking $(\mathbf{X}_{(n),i,j})$ to be the density array corresponding to $\Omega_{(n)},\mathbf{Q}_{(n)}$, the density arrays $(\mathbf{X}_{(n),i,j})$ converge in distribution to $(\mathbf{X}_{*,i,j})$.
\end{theorem}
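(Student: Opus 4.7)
The strategy has two stages: first realize the density array as the density array of a pseudofinite Markov chain, then approximate that pseudofinite chain by a sequence of finite chains.  For the first stage, since $(\mathbf{X}_{*,i,j})$ is partially exchangeable and dissociated, the Aldous--Hoover representation theorem gives a probability space $(\Omega, \mathcal{B}, \pi)$ and a measurable kernel $\hat p:\Omega^2 \times (0,\infty) \to \mathbb{R}^{\geq 0}$ such that $\mathbf{X}_{i,j}(t) \stackrel{d}{=} \hat p(\omega_i,\omega_j;t)$ for i.i.d.\ $\omega_i \sim \pi$.  Every axiom in the definition of a Markov chain density array is (after completing squares, as in the proof of Theorem \ref{thm:pseudo_to_array}) the vanishing of an expectation of a nonnegative polynomial in finitely many of the $\mathbf{X}_{i,j}$.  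Such an expectation vanishes iff the corresponding polynomial in $\hat p_t$ vanishes almost everywhere, which is precisely the matching pseudofinite Markov chain axiom.  Thus $(\Omega,\mathcal{B},\pi),\hat p_t$ is a pseudofinite chain whose density array is $(\mathbf{X}_{*,i,j})$.

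For the second stage, apply the spectral decomposition $\hat p_t = \sum_i \lambda_i^t \nu_i \otimes \nu_i$ from Lemma \ref{thm:close_rat} (extended to all $t$ by the discussion after Lemma \ref{thm:close_all}).  For each $n$ choose a truncation level $k_n \to \infty$ and a partition $\mathcal{P}_n$ of $\Omega$ into $m_n$ cells of equal $\pi$-measure, fine enough that each of $\nu_1,\ldots,\nu_{k_n}$ is $L^2$-approximated within $1/n$ by its conditional expectation onto $\mathcal{P}_n$.  Gram--Schmidt the cell-averaged eigenfunctions to produce orthonormal $\tilde\nu_1^{(n)} \equiv 1,\ldots,\tilde\nu_{k_n}^{(n)}$ in $L^2(\mathcal{P}_n,\pi_n)$, fix a slowly growing scale $c_n \to \infty$ with $c_n \geq |\log \lambda_{k_n}|$, and build a symmetric matrix
\[
\widehat{A}_n := \sum_{i \leq k_n} \Big(1 + \tfrac{\log \lambda_i}{c_n}\Big)\tilde\nu_i^{(n)}\otimes\tilde\nu_i^{(n)} + P_n^\perp,
\]
where $P_n^\perp$ is the orthogonal projection onto the complement of $\mathrm{span}\{\tilde\nu_i^{(n)} : i \leq k_n\}$ in $L^2(\mathcal{P}_n,\pi_n)$.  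Take the finite rate matrix to be $\mathbf{Q}_n := c_n(\widehat{A}_n - I)$; any residual negative off-diagonal entries of $\widehat{A}_n$ are absorbed by a vanishing admixture of the uniform stochastic matrix, which preserves reversibility and irreducibility.  The resulting $\widehat{\mathbf{P}}_t^{(n)} = e^{t\mathbf{Q}_n}$ has its top $k_n$ eigenvalues equal to $\lambda_i^t$ up to $O(1/c_n)$, with eigenvectors the $\tilde\nu_i^{(n)}$; a final rescaling enforces the normalization $G_{(n)}(1) = 2$.

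Convergence of the density arrays then reduces to convergence of joint moments of finite sub-arrays, which are polynomial integrals of $\widehat{\mathbf{P}}_{t_\ell}^{(n)}$ against $\pi_n^{\otimes k}$; these converge to the corresponding integrals against $\hat p_{t_\ell}$ because each $\tilde\nu_i^{(n)}$ is $L^2$-close to the original $\nu_i$, and uniform boundedness of $G_{(n)}(t)$ follows from the truncation $G_{(n)}(t) \leq \sum_{i \leq k_n}\lambda_i^t \leq G(t) + o(1)$.  The hard part will be the simultaneous balancing of the three parameters $k_n$, $m_n$, and $c_n$, so that: the Gram--Schmidt output stays $L^2$-close to each $\nu_i$ (rather than merely spanning the same subspace); $\widehat{A}_n$ has nonnegative off-diagonal entries up to a negligible uniform correction (so that $\mathbf{Q}_n$ is a valid rate matrix); and the spectral perturbations introduced by truncation, projection, and the substitution $\lambda_i^t \leadsto e^{c_n t (1 + \log\lambda_i/c_n - 1)}$ all vanish jointly in the limit.
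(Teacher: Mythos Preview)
Your route is quite different from the paper's. The paper never passes through a pseudofinite realization or a spectral discretization; it samples directly from the array by setting $\Omega_{(n)}=\{0,\dots,n-1\}$ and $\hat p^{(n)}_t(\omega_i,\omega_j)=\mathbf X_{*,i,j}(t)$, then repairs the resulting only-approximately-stochastic matrix by dividing by the row sums $\pi_{(n)}(\omega)=\sum_{\omega'}\hat p^{(n)}_1(\omega,\omega')$ and taking $\mathbf Q_{(n)}=\log\mathbf P_{(n),1}$. Most of the paper's work goes into checking that this renormalization does not disturb the limiting distribution (via Chapman--Kolmogorov and concentration), and that the approximation extends from integer to dyadic to all times. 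Your Stage~1 is fine in spirit---Chapman--Kolmogorov, once unpacked as $\mathbb E[(f-h)^2]=0$ with $h$ independent of the edge variable, forces the Aldous--Hoover representation to be random-free, which you should say explicitly.

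Stage~2 as written has a genuine gap. With the $P_n^\perp$ term present, $\widehat A_n$ acts as the identity on the $(m_n-k_n)$-dimensional complement, so $\mathbf Q_n=c_n(\widehat A_n-I)$ vanishes there and $\widehat{\mathbf P}^{(n)}_t=e^{t\mathbf Q_n}$ has eigenvalue $1$ with multiplicity $m_n-k_n+1$. The chain is then reducible, and
\[
G_{(n)}(t)=\sum_{i\le k_n}\lambda_i^t+(m_n-k_n),
\]
which directly contradicts your claimed bound $G_{(n)}(t)\le\sum_{i\le k_n}\lambda_i^t$. The ``vanishing admixture of the uniform matrix'' you invoke for off-diagonal positivity only nudges those complementary eigenvalues slightly below $1$, and the final time-rescaling cannot help either: forcing $G_{(n)}(1)=2$ when the unscaled value is of order $m_n$ rescales time so drastically that the top eigenvalues no longer approximate $\lambda_i^t$. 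A repair is possible---drop $P_n^\perp$ (so the complement gets eigenvalue $-c_n$ in $\mathbf Q_n$) and then tie $m_n$ to $c_n$ so that $(m_n-k_n)e^{-tc_n}\to 0$ for each fixed $t>0$, e.g.\ $\log m_n=o(c_n)$---but this is exactly the parameter-balancing you flagged as ``the hard part,'' and the current choice fails it. The paper's sampling construction avoids the issue altogether: an $n$-point sample of $\hat p_1$ produces an $n\times n$ matrix whose full list of $n$ eigenvalues approximates the spectrum of $\hat p_1$, so there is no artificially inserted complementary block to control.
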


Together with Theorem \ref{thm:pseudo_imp_dist}, these theorems give the complete cycle of equivalences: every bound sequence of Markov chains has a subsequence with convergent density arrays whose limit corresponds to a pseudofinite Markov chain, every pseudofinite Markov chain has a corresponding density array, and every density array is the limit of some bound sequence of Markov chains.

We will prove Theorem \ref{thm:main} in Section \ref{sec:limits} and Theorem \ref{thm:main2} in Section \ref{sec:sampling}

We would also like to show that each density array corresponds to a unique pseudofinite Markov chain.  This is not true, but in Lemma \ref{thm:uniqueness} we will show that we can also choose $\Omega,\hat p_t$ to have two additional properties---\emph{twin-freeness} and \emph{saturation} (both these notions will be defined in that section)---and Theorem \ref{thm:uniqueness} shows that pseudofinite Markov chains with these additional properties are unique.

\section{Ultraproducts}\label{sec:ultraproducts}

Before proving that bounded sequences of Markov chains have corresponding limit objects, we recall some basic facts about our main technique, the ultraproduct construction.  Rather than reiterate that development here, we briefly describe the construction, then state a theorem which encapsulates all needed properties of the construction and refer the reader to \cite{goldbring:_approx_logic_measure} for a proof and a detailed exposition of the technique.

An \emph{filter} on $\mathbb{N}$ is a collection $\mathcal{U}$ of subsets of $\mathbb{N}$ such that $\emptyset\not\in\mathcal{U}$, $\mathbb{N}\in\mathcal{U}$, and $\mathcal{U}$ is upwards closed and closed under finite intersections.  A filter $\mathcal{U}$ is an \emph{ultrafilter} if for any $A\subseteq\mathbb{N}$, either $A\in\mathcal{U}$ or $(\mathbb{N}\setminus A)\in\mathcal{U}$.

Ultrafilters have the convenient property that if $(r_n)_{n\in\mathbb{N}}$ is a bounded sequence of reals, there is a set $A\in\mathcal{U}$ such that $\lim_{n\in A}r_n$ converges; moreover, the value of this limit is determined by $\mathcal{U}$ (because $\mathcal{U}$ is closed under intersections).  We write $\lim_{\mathcal{U}}r_n$ for this value.  The ultraproduct construction can be seen as a generalization of this idea: it is a construction that makes essentially arbitrary limits converge.

Given a sequence $\Omega^{(n)}$ of sets and an ultrafilter $\mathcal{U}$, we consider $\hat\Omega$, the collection of sequences $\langle \omega^{(n)}\rangle$ such that for each $n$, $\omega^{(n)}\in\Omega^{(n)}$.  We identify sequences $\omega\sim\omega'$ if $\{n\mid \omega^{(n)}=\omega'{}^{(n)}\}\in\mathcal{U}$.  We take our space to be the quotient $\Omega=\hat\Omega/\mathop{\sim}$.

If for each $n$ we have a subset $A^{(n)}\subseteq\Omega^{(n)}$, we can define $A=\lim\langle A^{(n)}\rangle$ to be those $\omega$ such that $\{n\mid \omega^{(n)}\in A^{(n)}\}\in\mathcal{U}$.  Subsets of this form are called \emph{internal}.

Given operations on each $\Omega^{(n)}$, we can generally lift them to $\Omega$ by considering what happens ``almost always''---that is, for a set of $n$ belonging to $\mathcal{U}$.  In particular, if for each $n$, $\pi^{(n)}$ is a probability measure on $\Omega^{(n)}$, we immediately obtain a finitely additive measure $\pi$ on the internal subsets of $\Omega$ by setting $\pi(A)=\lim_{\mathcal{U}}\pi^{(n)}(A^{(n)})$.  This extends to a measure---the \emph{Loeb measure}---on the $\sigma$-algebra generated by the internal sets.

With more effort, we can show that the $L^2$ (and more generally, $L^p$) spaces on $(\Omega,\pi)$ are, in a suitable sense, limits of the $L^2$ spaces on $(\Omega^{(n)},\pi^{(n)})$.  This is the content of the following result, which summarizes the results in \cite{goldbring:_approx_logic_measure} which will be needed in this paper.

\begin{theorem}\label{thm:ultraproduct}
  Let $\{(\Omega^{(n)},\pi^{(n)})\}$ be a sequence of finite probability spaces with $|\Omega^{(n)}|\rightarrow\infty$.  For each $i,n$, let $f_i^{(n)}$ be a function on $\Omega^{(n)}$ with $L^2$ norm bounded by $B_i$ (independently of $n$).  For any infinite set $S\subseteq\mathbb{N}$, there exist:
  \begin{itemize}
  \item A probability space $(\Omega,\mathcal{B},\pi)$, and
  \item For every sequence of sets $\langle A^{(n)}\rangle$ with each $A^{(n)}$ a subset of $\Omega^{(n)}$, a set $\lim\langle A^{(n)}\rangle=A\subseteq\Omega$ in $\mathcal{B}$,
  \item For each $i$, $L^2$ functions $f_i$ with $L^2$ norm bounded by $B_i$,
  \end{itemize}
so that:
\begin{itemize}
\item $\mathcal{B}$ is generated by sets of the form $\lim\langle A^{(n)}\rangle$,
\item The operation $\lim$ commutes with union, intersection, and complement, so $\lim\langle A^{(n)}\cap B^{(n)}\rangle=\lim\langle A^{(n)}\rangle\cap\lim\langle B^{(n)}\rangle$ and similarly for $\cup$ and complement,
\item Given a finite set $I$, finitely many sequences $\langle A^{(n)}_{1}\rangle,\ldots,\langle A^{(n)}_{r}\rangle$ with each $A_{j}^{(n)}\subseteq \Omega^{(n)}$ and setting $A_j=\lim\langle A_{j}^{(n)}\rangle$, there is a set $S'\subseteq S$ such that
\[\lim_{n\in S'}\int_{\bigcap_{j\leq r}A_{j}^{(n)}}\prod_{i\in I}f^{(n)}_i d\pi^{(n)}=\int_{\bigcap_{j\leq r}A_j}\prod_{i\in I}f_id\pi.\]
\end{itemize}
\end{theorem}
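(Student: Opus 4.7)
The plan is to construct $(\Omega,\mathcal{B},\pi)$ as a Loeb space over the set-theoretic ultraproduct, and then build the $L^2$ functions $f_i$ by a truncation-plus-lifting procedure. First I would fix a non-principal ultrafilter $\mathcal{U}$ on $\mathbb{N}$ containing the cofinite subsets of $S$, form the ultraproduct $\hat\Omega = \prod_n \Omega^{(n)}$ modulo the equivalence relation $\omega \sim \omega'$ iff $\{n : \omega^{(n)} = \omega'^{(n)}\} \in \mathcal{U}$, and set $\Omega = \hat\Omega/{\sim}$. The operation $\lim\langle A^{(n)}\rangle = \{\omega : \{n : \omega^{(n)} \in A^{(n)}\} \in \mathcal{U}\}$ carves out the algebra of internal subsets of $\Omega$, and the ultrafilter axioms (closure under finite intersection plus the for-every-$A$-either-$A$-or-$\mathbb{N}\setminus A$ property) immediately force $\lim$ to commute with finite unions, intersections, and complements.

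The Loeb measure is then built by declaring $\pi_0(\lim\langle A^{(n)}\rangle) = \lim_{\mathcal{U}} \pi^{(n)}(A^{(n)})$, which is a well-defined finitely additive premeasure on the algebra of internal sets. Countable additivity comes from the $\omega_1$-saturation of the ultraproduct: any countable decreasing chain of nonempty internal sets has nonempty intersection, so the only way a countable decreasing chain can have empty limit is for some tail to already be empty at the algebra level. Carathéodory's theorem then extends $\pi_0$ to a countably additive measure $\pi$ on the $\sigma$-algebra $\mathcal{B}$ generated by the internal sets.

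For the $L^2$ functions, I would proceed by truncation. For each $N$, set $f_{i,N}^{(n)} = f_i^{(n)} \cdot \mathbf{1}_{\{|f_i^{(n)}|\leq N\}}$; these are uniformly bounded internal functions. On a representative $\omega = [\langle \omega^{(n)}\rangle]$ define $f_{i,N}(\omega) = \operatorname{st}\bigl(\lim_{\mathcal{U}} f_{i,N}^{(n)}(\omega^{(n)})\bigr)$, which yields a Loeb-measurable bounded function $f_{i,N}$ on $\Omega$. The uniform $L^2$ bound $\|f_i^{(n)}\|_{L^2(\pi^{(n)})} \leq B_i$ translates (via Chebyshev) into $\|f_{i,N} - f_{i,M}\|_{L^2(\pi)} \to 0$ as $M,N \to \infty$, so the sequence $\{f_{i,N}\}_N$ is Cauchy in $L^2(\pi)$ and converges to some $f_i$ with $\|f_i\|_{L^2(\pi)} \leq B_i$.

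The final step is the integration identity. For the bounded truncations, the identity
\[\lim_{n\in S'}\int_{\bigcap_j A_j^{(n)}}\prod_{i\in I} f_{i,N}^{(n)}\,d\pi^{(n)} = \int_{\bigcap_j A_j}\prod_{i\in I} f_{i,N}\,d\pi\]
holds along some $S' \in \mathcal{U}$ by the standard Loeb construction, via simple-function approximation and dominated convergence. To remove the truncation, iterated Cauchy--Schwarz combined with the uniform $L^2$ bounds controls both $\int \prod_i f_i\,d\pi - \int \prod_i f_{i,N}\,d\pi$ and its finite-space analogue. The main obstacle will be handling these tails uniformly: the product of $|I|$ unbounded $L^2$ functions is not $L^1$-dominated in any obvious way, so one needs a diagonal argument, letting $N\to\infty$ along a sequence and thinning $S'$ accordingly, to squeeze the truncation error and the standard-part error simultaneously. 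This diagonal thinning is precisely why the conclusion yields an infinite $S' \subseteq S$ rather than the full ultrafilter-convergence statement — everything else is essentially routine bookkeeping on top of the standard Loeb framework.
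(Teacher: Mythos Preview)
The paper does not actually prove this theorem: it states the result as a summary of facts established in the cited reference and only sketches the ultraproduct/Loeb construction in the surrounding paragraphs. Your outline follows exactly that sketch --- non-principal ultrafilter concentrating on $S$, internal sets, finitely additive premeasure extended to the Loeb measure via Carath\'eodory and $\omega_1$-saturation, then lifting of $L^2$ functions by truncation --- so there is no divergence of approach to compare.

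One genuine gap worth flagging: your claim that ``iterated Cauchy--Schwarz combined with the uniform $L^2$ bounds'' controls $\int \prod_{i\in I} f_i\,d\pi - \int \prod_{i\in I} f_{i,N}\,d\pi$ breaks down once $|I| \geq 3$. Generalized H\"older needs $\sum_i 1/p_i = 1$, so $L^2$ bounds on every factor only handle pairs; with three or more factors the product need not lie in $L^1(\pi)$ at all, and then no diagonal thinning of $S'$ can manufacture a finite limit. You correctly identify this as the main obstacle, but the fix you propose does not resolve it. In practice the paper only ever invokes the last clause with products of at most two eigenfunctions (the inner products $\int \nu_i\nu_j\,d\pi$), where your Cauchy--Schwarz argument is perfectly sound; the general-$I$ statement as written presumably inherits sharper integrability hypotheses from the cited source.
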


In particular, taking $I=\emptyset$, the last clause implies that
\[\lim_{n\in S'}\pi^{(n)}(\bigcap_{j\leq r}A_{j}^{(n)})=\pi(\bigcap_{j\leq r}A_j).\]

We call such a probability space $(\Omega,\mathcal{B},\pi)$ together with the operation $\lim$ an \emph{ultraproduct} of the sequence $\{\Omega^{(n)}\}$.  When we have specified a set $S$ in the theorem, we say the ultraproduct \emph{concentrates} on $S$.  The sets $\lim\langle A^{(n)}\rangle$ are called \emph{internal subsets} of $\Omega$.

\section{Limits of Bounded Sequences}\label{sec:limits}

In order to show convergence in distribution, we will need the following result:
\begin{lemma}\label{thm:convergence_in_dist}
Let $f:\mathbb{R}^k\rightarrow\mathbb{R}$ be a function bounded by $K$ such that whenever $|x-y|<\delta$, $|f(x)-f(y)|<\epsilon/2$.  Let $\mathbf{X}$ be a $\mathbb{R}^k$-valued random variable, let $\mathbf{A}_0,\ldots,\mathbf{A}_d$ be pairwise disjoint events and $I_1,\ldots, I_d$ subsets of $\mathbb{R}^k$ such that (writing $\mathbb{P}$ for the law of $\mathbf{X}$):
\begin{itemize}
\item $\mathbb{P}(\bigcup_{i\leq d}\mathbf{A}_i)=1$, %
\item $\mathbb{P}(\mathbf{X}\in I_i\mid \mathbf{A}_i)=1$ when $1\leq i\leq d$,
\item If $x,y\in I_i$ then $|x-y|<\delta$,
\item $\mathbb{P}(\mathbf{A}_0)<\epsilon/2K$.
\end{itemize}
For each $i\in[1, d]$, fix $r_i\in I_i$ and let $\alpha_i=\mathbb{P}(\mathbf{A}_i)$.  Then
\[\left|\mathbb{E}(f(\mathbf{X}))-\sum_{i\in[1, d]}\alpha_i r_i\right|<\epsilon.\]
\end{lemma}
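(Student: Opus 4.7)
The plan is a direct splitting-plus-modulus-of-continuity argument. Using that the events $\mathbf{A}_i$ partition the sample space up to a null set, I would decompose
\[\mathbb{E}(f(\mathbf{X}))=\mathbb{E}(f(\mathbf{X})\mathbf{1}_{\mathbf{A}_0})+\sum_{i=1}^{d}\mathbb{E}(f(\mathbf{X})\mathbf{1}_{\mathbf{A}_i})\]
and compare each of the $d$ latter summands to $\alpha_i f(r_i)$. (I read the ``$r_i$'' appearing in the displayed conclusion as $f(r_i)$, since $r_i\in I_i\subseteq\mathbb{R}^k$ and so the display as literally written does not type-check when $k>1$.) The $\mathbf{A}_0$ term will be absorbed by the $L^\infty$ bound $|f|\leq K$ together with the hypothesis that $\mathbb{P}(\mathbf{A}_0)$ is very small, while each $\mathbf{A}_i$ block for $i\geq 1$ will be absorbed by the uniform continuity of $f$ applied on the small-diameter set $I_i$.

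For the $\mathbf{A}_0$ piece, the crude estimate $|\mathbb{E}(f(\mathbf{X})\mathbf{1}_{\mathbf{A}_0})|\leq K\,\mathbb{P}(\mathbf{A}_0)<\epsilon/2$ follows directly from $\mathbb{P}(\mathbf{A}_0)<\epsilon/(2K)$. For each $i\geq 1$, the hypothesis $\mathbb{P}(\mathbf{X}\in I_i\mid \mathbf{A}_i)=1$ combined with $\mathrm{diam}(I_i)<\delta$ and the modulus of continuity gives $|f(\mathbf{X})-f(r_i)|<\epsilon/2$ almost surely on $\mathbf{A}_i$; integrating over $\mathbf{A}_i$ yields
\[\bigl|\mathbb{E}(f(\mathbf{X})\mathbf{1}_{\mathbf{A}_i})-\alpha_i f(r_i)\bigr|\leq \tfrac{\epsilon}{2}\alpha_i.\]
Summing over $i=1,\dots,d$ and using $\sum_{i=1}^d\alpha_i\leq 1$ gives a combined $\mathbf{A}_i$-block error of at most $\epsilon/2$, and adding the $\mathbf{A}_0$ contribution yields the claimed bound $<\epsilon$ by the triangle inequality.

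There is no genuinely hard step here: the lemma is a routine technical packaging of the standard ``uniformly continuous test function plus partition with one small exceptional cell'' trick that will later be fed into a convergence-in-distribution argument. The only points I would track carefully are that the exceptional mass $\mathbb{P}(\mathbf{A}_0)$ is charged exactly once at the crude rate $K$, and that the almost-sure containment $\mathbf{X}\in I_i$ on $\mathbf{A}_i$ is strong enough to push through the integral (it is, since the resulting $|f(\mathbf{X})-f(r_i)|<\epsilon/2$ holds off a $\mathbb{P}$-null subset of $\mathbf{A}_i$ and hence is preserved under $\mathbb{E}(\,\cdot\,\mathbf{1}_{\mathbf{A}_i})$).
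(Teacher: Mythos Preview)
Your proof is correct and essentially identical to the paper's: both decompose $\mathbb{E}(f(\mathbf{X}))$ over the events $\mathbf{A}_0,\ldots,\mathbf{A}_d$, bound the $\mathbf{A}_0$ contribution crudely by $K\cdot\mathbb{P}(\mathbf{A}_0)<\epsilon/2$, and use the modulus of continuity on each $I_i$ to bound the remaining terms by $\frac{\epsilon}{2}\sum_i\alpha_i\leq\epsilon/2$. Your observation that the displayed conclusion should read $f(r_i)$ rather than $r_i$ is also correct; the paper's own proof silently makes the same substitution.
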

\begin{proof}
  We have
\[\mathbb{E}(f(\mathbf{X}))=\mathbb{E}(f(\mathbf{X})\mid \mathbf{A}_0)\mathbb{P}(\mathbf{A}_0)+\sum_{i\in [1,d]}\mathbb{E}(f(\mathbf{X})\mid \mathbf{A}_i)\mathbb{P}(\mathbf{A}_i).\]
Since $f$ is bounded and $\mathbb{P}(\mathbf{A}_0)<\epsilon/2K$, we have 
\[\left|\mathbb{E}(f(\mathbf{X})\mid \mathbf{A}_0)\mathbb{P}(\mathbf{A}_0)\right|<K\cdot \epsilon/2K=\epsilon/2.\]
For each $i\leq d$, we have
\[\left|\mathbb{E}(f(\mathbf{X})\mid \mathbf{X}\in A_i)-r_i\right|<\epsilon/2.\]
Therefore
\[\left|\mathbb{E}(f(\mathbf{X}))-\sum_{i\in[1, d]}\alpha_i r_i\right|< \epsilon/2+\sum_{i\in[1, d]}\alpha_i\epsilon/2\leq\epsilon.\]
\end{proof}

A consequence is the following standard fact:
\begin{lemma}\label{thm:convergence_in_dist_for_l2}
  Let $f$ be a bounded continuous function.  Then for every $\epsilon>0$ there is a $\delta>0$ such that $||\mathbf{X}-\mathbf{Y}||_{L^2}<\delta$ implies that
\[\left|\mathbb{E}(f(\mathbf{X}))-\mathbb{E}(f(\mathbf{Y}))\right|<\epsilon.\]
\end{lemma}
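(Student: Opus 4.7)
The plan is to combine Chebyshev's inequality with a truncation argument, reducing the problem to the uniformly continuous case. Since $f$ is only assumed bounded and continuous on $\mathbb{R}^k$ (not uniformly continuous), the main obstacle is to cut down to a compact region on which $f$ automatically becomes uniformly continuous and which captures most of the mass of $\mathbf{X}$; once this truncation is in place the remaining arithmetic is routine.

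Concretely, given $\epsilon>0$, I would let $K=\sup|f|$ and pick $M$ large enough that $\mathbb{P}(|\mathbf{X}|>M)<\epsilon/(8K)$, which is possible because the law of $\mathbf{X}$ is a probability measure. By Heine--Cantor the restriction of $f$ to the closed ball of radius $M+1$ is uniformly continuous, so there exists $\eta\in(0,1)$ such that $|x-y|<\eta$ within this ball implies $|f(x)-f(y)|<\epsilon/4$. Then I would choose $\delta>0$ small enough that $\delta^2/\eta^2<\epsilon/(8K)$. Chebyshev's inequality would then yield $\mathbb{P}(|\mathbf{X}-\mathbf{Y}|\geq\eta)<\epsilon/(8K)$ whenever $\|\mathbf{X}-\mathbf{Y}\|_{L^2}<\delta$.

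On the good event $E=\{|\mathbf{X}|\leq M\}\cap\{|\mathbf{X}-\mathbf{Y}|<\eta\}$, both $\mathbf{X}$ and $\mathbf{Y}$ lie in the ball of radius $M+1$ at distance less than $\eta$, so $|f(\mathbf{X})-f(\mathbf{Y})|<\epsilon/4$; on the complement $E^c$, which has probability at most $\epsilon/(4K)$ by a union bound, the difference is at most $2K$. Integrating the two bounds against their probabilities gives $|\mathbb{E}(f(\mathbf{X}))-\mathbb{E}(f(\mathbf{Y}))|<\epsilon/4+2K\cdot\epsilon/(4K)=3\epsilon/4<\epsilon$, finishing the argument.

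An alternative route would be to apply Lemma \ref{thm:convergence_in_dist} directly to both $\mathbf{X}$ and $\mathbf{Y}$ with a shared partition of a large cube and then argue that the weights $\mathbb{P}(\mathbf{X}\in I_i)$ and $\mathbb{P}(\mathbf{Y}\in I_i)$ agree up to a quantity controlled by $\|\mathbf{X}-\mathbf{Y}\|_{L^2}$; however, the number of partition pieces would grow as $\eta$ shrinks, so controlling the cumulative error is awkward. The direct Chebyshev-plus-truncation approach above is cleaner and isolates the failure of global uniform continuity as the only substantive ingredient.
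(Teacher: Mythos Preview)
Your proof is correct. The paper does not actually prove this lemma; it merely labels it a ``standard fact'' following Lemma~\ref{thm:convergence_in_dist}, so there is nothing substantive to compare against. Your truncation-plus-Chebyshev argument is the natural way to supply the missing details, and your closing paragraph is right that routing through Lemma~\ref{thm:convergence_in_dist} with a shared partition would be more awkward than going direct.

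One point worth making explicit: your $M$ (hence $\eta$, hence $\delta$) depends on the law of $\mathbf{X}$, so what you actually prove is the version $\forall\mathbf{X}\ \forall\epsilon\ \exists\delta\ \forall\mathbf{Y}$. The fully uniform reading $\forall\epsilon\ \exists\delta\ \forall\mathbf{X},\mathbf{Y}$ is false for a bounded continuous but non-uniformly-continuous $f$ (take $f(x)=\sin(x^2)$ and deterministic $\mathbf{X},\mathbf{Y}$ far out on the line with $|\mathbf{X}-\mathbf{Y}|$ arbitrarily small). This is harmless for the paper's applications---wherever the lemma is invoked the random variables have uniformly bounded second moment, so one can pick $M$ once via Chebyshev on $\|\mathbf{X}\|_{L^2}$---but the paper's statement is silent on the quantifier order, and it would strengthen your writeup to say so.
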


\ 

Let $\Omega_{(n)},\mathbf{Q}_{(n)}$ be a bounded sequence of Markov chains with stationary distributions $\pi_{(n)}$.  Rather than directly taking an ultraproduct, we will pass to the eigenvector representation.  We may view each $\widehat{\mathbf{P}}_{(n),t}$ as an operator on the $L^2(\pi_{(n)})$-measurable functions by
\[(\widehat{\mathbf{P}}_{(n),t}f)(\omega)=\int \widehat{\mathbf{P}}_{(n),t}(\omega,\omega')f(\omega')d\omega'.\]
Since $\widehat{\mathbf{P}}_{(n),t}$ is symmetric, this operator is Hermitian.  Using Perron-Frobenius, we see that the $1$ is an eigenvalue with corresponding eigenvector $\omega\mapsto 1$ and that all other eigenvalues are in the range $[0,1)$.  It follows that if $1=\lambda_{(n),0}>\lambda_{(n),1}\geq\cdots$ are the eigenvalues of $\widehat{\mathbf{P}}_{(n),1}$ and $\nu_{(n),0},\ldots,$ are a corresponding orthonormal sequence of eigenvectors,
\[\widehat{\mathbf{P}}_{(n),t}(\omega,\omega')=\sum_i\lambda_{(n),i}^t\nu_{(n),i}(\omega)\nu_{(n),i}(\omega').\]
Note that $G_{(n)}(t)=\sum_i\lambda_{(n),i}^t$.

We apply Theorem \ref{thm:ultraproduct} to the functions $\nu_{(n),i}$ and the sequences $\lambda_{(n),i}$ (which we may view as constant functions).  We obtain a probability space $(\Omega,\mathcal{B},\pi)$, limiting values $\lambda_i\in[0,1]$ and measurable functions $\nu_i$ with $||\nu_i||\leq 1$.

\begin{lemma}
  $\lim_{i\rightarrow\infty}\lambda_i=0$.
\end{lemma}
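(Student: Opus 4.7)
The plan is to argue by contradiction from the normalization $G_{(n)}(1)=2$. Recall that on each finite chain we have $G_{(n)}(t)=\sum_i \lambda_{(n),i}^t$ (by integrating the diagonal of the eigenvector expansion of $\widehat{\mathbf{P}}_{(n),t}$), so in particular $\sum_i \lambda_{(n),i}=2$ for every $n$. This uniform bound on the sum of the eigenvalues is the only ingredient I will need.

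Suppose for contradiction that $\lambda_i\not\to 0$. Then there is some $\epsilon>0$ and an infinite set $I\subseteq\mathbb{N}$ with $\lambda_i\geq\epsilon$ for every $i\in I$. Fix any integer $k>4/\epsilon$ and choose $i_1<i_2<\cdots<i_k$ in $I$. Since each $\lambda_{i_j}$ is the ultraproduct limit of $\lambda_{(n),i_j}$, the set
\[A_j=\{n:\lambda_{(n),i_j}\geq\epsilon/2\}\]
lies in the ultrafilter $\mathcal{U}$. Because $\mathcal{U}$ is closed under finite intersection, $A=\bigcap_{j\leq k}A_j\in\mathcal{U}$, and in particular $A\neq\emptyset$.

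Pick any $n\in A$. Then $\lambda_{(n),i_1},\ldots,\lambda_{(n),i_k}$ are $k$ distinct nonnegative eigenvalues, each at least $\epsilon/2$. Since all $\lambda_{(n),i}\geq 0$,
\[2=G_{(n)}(1)=\sum_i\lambda_{(n),i}\geq \sum_{j=1}^k\lambda_{(n),i_j}\geq k\cdot\frac{\epsilon}{2},\]
forcing $k\leq 4/\epsilon$, contradicting the choice of $k$. Hence $\lambda_i\to 0$.

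The argument is essentially a one-line pigeonhole, and I do not foresee a real obstacle; the only point to be mindful of is that the eigenvalues from different indices $i$ can only be compared at a single $n$, which is why one first pulls finitely many of them into a common ultrafilter set before applying the bound on $G_{(n)}(1)$.
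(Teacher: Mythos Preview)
Your proof is correct and follows essentially the same approach as the paper: both argue by contradiction from the normalization $\sum_i\lambda_{(n),i}=G_{(n)}(1)=2$, passing a finite collection of limiting eigenvalues back to a single $n$ to violate this bound. The only cosmetic differences are that the paper works with the partial sum $\sum_{j\leq i}\lambda_j>2$ directly rather than isolating $k$ individual eigenvalues, and phrases the transfer in terms of an infinite set $S$ (per Theorem~\ref{thm:ultraproduct}) rather than ultrafilter membership.
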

\begin{proof}
  Suppose not.  Since $\lambda_i\geq 0$ for all $i$, it follows that $\lim_{i\rightarrow\infty}\sum_{j\leq i}\lambda_j=\infty$.  Pick $i$ large enough that $\sum_{j\leq i}\lambda_j>2$; then there must be an infinite set $S$ so that for $n\in S$, $\sum_{j\leq i}\lambda_{(n),j}>2$.  But this contradicts the fact that $\sum_i\lambda_{(n),i}=G_{(n)}(1)=2$.
\end{proof}

For any $i,j,n$ we have
\[\int \nu_{(n),i}(\omega)\nu_{(n),j}(\omega)d\omega=\left\{\begin{array}{ll}1&\text{if }i=j\\0&\text{otherwise}\end{array}\right.\]
so the $\nu_i$ have $L^2$ norm $1$ and are pairwise orthogonal.  Also, since $\nu_{(n),0}$ is constantly equal to $1$ for all $n$, $\nu_0$ is constantly equal to $1$.  Therefore for $i>0$, $\int \nu_i(\omega)d\omega=0$.

We define
\[\hat p_t(\omega,\omega')=\sum_i\lambda_i^t\nu_i(\omega)\nu_i(\omega').\]
We will show that $\hat p_t$ gives a pseudofinite Markov chain.  Symmetry of $\hat p_t(\omega,\omega')$ follows immediately from the definition.

\begin{lemma}[Stochasticity of $\hat p_t$]
  For every $t>0$ and almost every $\omega$, $\int\hat p_t(\omega,\omega')d\omega'=1$.
\end{lemma}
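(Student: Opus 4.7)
The plan is to exploit stochasticity of each finite chain, expressed in the eigenvector expansion, and then pass to the limit via $L^2$-convergence of the spectral partial sums. In the finite setting, $\int \widehat{\mathbf{P}}_{(n),t}(\omega, \omega') d\pi_{(n)}(\omega') = 1$ reduces, under the expansion $\widehat{\mathbf{P}}_{(n),t}(\omega,\omega') = \sum_i \lambda_{(n),i}^t \nu_{(n),i}(\omega)\nu_{(n),i}(\omega')$, to three identities: $\nu_{(n),0} \equiv 1$, $\lambda_{(n),0} = 1$, and $\int \nu_{(n),i}\, d\pi_{(n)} = 0$ for $i > 0$. Each of these survives the ultraproduct passage (as the author already noted in the paragraph preceding the lemma), giving $\nu_0 \equiv 1$, $\lambda_0 = 1$, and $\int \nu_i\, d\pi = 0$ for $i > 0$.

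First I would introduce the partial sum
\[
S_N(\omega, \omega') := \sum_{i \leq N} \lambda_i^t \nu_i(\omega) \nu_i(\omega').
\]
For every $\omega$ and every $N$, integrating term by term yields
\[
\int S_N(\omega, \omega')\, d\omega' = \lambda_0^t \nu_0(\omega)\cdot 1 + \sum_{0 < i \leq N} \lambda_i^t \nu_i(\omega) \cdot 0 = 1.
\]
The task then reduces to propagating this identity through the limit $N \to \infty$.

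For $L^2(\pi \times \pi)$-convergence $S_N \to \hat p_t$ I will use that the tensors $\nu_i(\omega)\nu_i(\omega')$ are pairwise orthogonal of unit norm in $L^2(\pi \times \pi)$, so $\|S_N - S_M\|_{L^2}^2 = \sum_{M < i \leq N} \lambda_i^{2t}$, and it suffices to know that $\sum_i \lambda_i^{2t} < \infty$. Since $\lambda_i \in [0,1]$, $\lambda_i^{2t} \leq \lambda_i^t$, and the boundedness hypothesis together with standard ultraproduct Fatou-type reasoning gives $\sum_i \lambda_i^t \leq \liminf_n \sum_i \lambda_{(n),i}^t = \liminf_n G_{(n)}(t) \leq B_t$. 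Hence $S_N$ is Cauchy in $L^2(\pi \times \pi)$ and converges to $\hat p_t$ by definition.

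Finally, Cauchy--Schwarz applied fiberwise (using $\|1\|_{L^2(\pi)} = 1$) combined with Fubini shows that integration against the constant function $1$ in the second variable is a contraction $L^2(\pi \times \pi) \to L^2(\pi)$. Applied to $f = S_N - \hat p_t$, this forces the $L^2(\pi)$-convergence
\[
\Bigl\|\,\omega \mapsto \int S_N(\omega,\omega')\,d\omega' \;-\; \omega \mapsto \int \hat p_t(\omega,\omega')\,d\omega'\,\Bigr\|_{L^2(\pi)} \longrightarrow 0.
\]
Since the left-hand functions are identically $1$, the limit $\omega \mapsto \int \hat p_t(\omega,\omega')\,d\omega'$ equals $1$ almost everywhere. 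The one delicate point is really just justifying the $L^2$-convergence of the spectral expansion of $\hat p_t$, which is exactly what the uniform bound $B_t$ was introduced to guarantee; the rest is a routine swap of sum and integral.
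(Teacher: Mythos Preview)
Your proof is correct and follows essentially the same route as the paper's: both truncate the spectral expansion, observe that the partial sum integrates to $1$ in the second variable because $\nu_0\equiv 1$ and $\int\nu_i\,d\pi=0$ for $i>0$, and then pass the identity to the limit using $L^2(\pi\times\pi)$-convergence of the partial sums together with the fact that integrating out one variable is an $L^2$-contraction. Your explicit justification that $\sum_i\lambda_i^t\leq B_t$ (via a Fatou-type bound on finite partial sums) is a point the paper uses without spelling out at this stage, so if anything your write-up is slightly more careful there.
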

\begin{proof}
Let $t$ be given.  We will show that for every $\epsilon>0$, $||1-\int\hat p_t(\omega,\omega')d\omega'||_{L^2}<\epsilon$.

Fix $\epsilon>0$.  Pick $k$ large enough that $\sum_{i>k}\lambda_i^t<\epsilon$, so
\[||\hat p_t(\omega,\omega')-\sum_{i\leq k}\lambda_i^t\nu_i(\omega)\nu_i(\omega')||_{L^2}<\epsilon.\]

Since $\int\nu_i(\omega)d\omega=0$ for $i>0$, 
\[\int\sum_{i\leq k}\lambda_i^t\nu_i(\omega)\nu_i(\omega')d\pi(\omega')=
1+\sum_{0<i\leq k}\lambda_i^t\nu_i(\omega)\int\nu_i(\omega')d\pi(\omega')
=1.\]

Therefore
\begin{align*}
||1-\int\hat p_t(\omega,\omega')d\pi(\omega')||_{L^2}
&\leq ||1-\int\sum_{i\leq k}\lambda_i^t\nu_i(\omega)\nu_i(\omega')d\pi(\omega')||_{L^2}\\
&\ \ \ \ +||\int[\hat p_t(\omega,\omega')-\sum_{i\leq k}\lambda_i^t\nu_i(\omega)\nu_i(\omega')]d\pi(\omega')||_{L^2}\\
&\leq 0+||\hat p_t(\omega,\omega')-\sum_{i\leq k}\lambda_i^t\nu_i(\omega)\nu_i(\omega')||_{L^2}\\
&<\epsilon.
\end{align*}

Since this holds for every $\epsilon>0$, $||1-\int\hat p_t(\omega,\omega')d\omega'||_{L^2}=0$, so the set of $\omega$ such that $\int\hat p_t(\omega,\omega')d\omega'\neq 1$ must have measure $0$.
\end{proof}

\begin{lemma}[Chapman-Kolmogorov for $\hat p_t$]
  For any $s,t>0$ and almost every $\omega,\omega'$, $\hat p_{s+t}(\omega,\omega')=\int \hat p_s(\omega,\xi)\hat p_t(\xi,\omega')d\xi$.  Additionally, for almost every $\omega$, $\hat p_{s+t}(\omega,\omega)=\int \hat p_s(\omega,\xi)\hat p_t(\xi,\omega)d\xi$.
\end{lemma}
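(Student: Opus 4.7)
The plan is to substitute the eigenfunction decomposition $\hat p_t(\omega,\omega') = \sum_i \lambda_i^t \nu_i(\omega)\nu_i(\omega')$, established in Lemmas \ref{thm:close_rat} and \ref{thm:close_all}, into the right-hand side and collapse the resulting double sum using orthonormality of the $\nu_i$. The formal calculation I am aiming for is
\[\int \hat p_s(\omega,\xi)\hat p_t(\xi,\omega')\,d\xi = \sum_{i,j}\lambda_i^s\lambda_j^t\nu_i(\omega)\nu_j(\omega')\int\nu_i(\xi)\nu_j(\xi)\,d\xi = \sum_i\lambda_i^{s+t}\nu_i(\omega)\nu_i(\omega') = \hat p_{s+t}(\omega,\omega'),\]
with the diagonal version obtained by substituting $\omega'=\omega$ inside the sums.

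To make this rigorous, I would truncate. Setting $\hat p_t^{(k)}(\omega,\omega') := \sum_{i\le k}\lambda_i^t\nu_i(\omega)\nu_i(\omega')$, Fubini and orthonormality give $\int \hat p_s^{(k)}(\omega,\xi)\hat p_t^{(k)}(\xi,\omega')\,d\xi = \hat p_{s+t}^{(k)}(\omega,\omega')$ pointwise for every finite $k$. The remaining work is to pass to the limit $k\to\infty$. For the off-diagonal statement I would work in $L^2(\pi\times\pi)$: one has $\|\hat p_t - \hat p_t^{(k)}\|_{L^2}^2 = \sum_{i>k}\lambda_i^{2t}$, which tends to $0$ because $\sum_i\lambda_i^{2t} = G(2t)$ is finite (this finiteness comes from the uniform bound $G_{(n)}(2t)\le B_{2t}$ of the original sequence, which transfers through the ultraproduct). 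A Cauchy--Schwarz estimate in $\xi$, followed by integration in $\omega$ and $\omega'$, then propagates this $L^2$ convergence through the convolution, and by triangle inequality the convolutions converge to $\hat p_{s+t}$ in $L^2(\pi\times\pi)$.

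The diagonal Chapman--Kolmogorov requires a slightly different setup because the diagonal has measure zero in $\Omega^2$, so the off-diagonal $L^2(\pi\times\pi)$ reasoning does not directly apply. I would instead interpret $\hat p_{s+t}(\omega,\omega)$ as the a.e.\ defined function $\sum_i\lambda_i^{s+t}\nu_i(\omega)^2$; this series converges in $L^1(\pi)$ since its norm equals $\sum_i\lambda_i^{s+t}=G(s+t)<\infty$. The analogous truncation argument then yields the diagonal identity in $L^1(\pi)$. The main obstacle is not conceptual but bookkeeping: the off-diagonal and diagonal identities naturally live in different function spaces, so one must check the truncation tails vanish in the right norm in each case. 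Once the truncation framework is in place, the interchange of summation and integration is essentially routine.
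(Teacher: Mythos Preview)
Your proposal is correct and follows essentially the same truncation-plus-orthonormality argument as the paper: both truncate to $\hat p_t^{(k)}$, compute the truncated convolution exactly via $\int\nu_i\nu_j\,d\xi=\delta_{ij}$, and control the tails by Cauchy--Schwarz and the summability of $\sum_i\lambda_i^t$. The only cosmetic differences are that the paper carries the off-diagonal estimate in $L^1(\pi\times\pi)$ rather than $L^2$, and that in this section the eigenfunction expansion is the \emph{definition} of $\hat p_t$ rather than a consequence of Lemmas~\ref{thm:close_rat} and~\ref{thm:close_all}.
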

\begin{proof}
  The arguments for the two parts are identical except for notation.  We give the Chapman-Kolmogorov case; for the diagonal case, simply treat $\omega'$ as being equal to $\omega'$ (and consider the $L^1$ norm on $\Omega$ instead of $\Omega^2$).

  Let $s,t$ be given.  We will show that for every $\epsilon>0$, 
\[||\hat p_{s+t}(\omega,\omega')-\int \hat p_s(\omega,\xi)\hat p_t(\xi,\omega')d\xi||_{L^1}<\epsilon.\]

Fix $\epsilon>0$.  Pick $k$ large enough that $\sum_{i>k}\lambda_i^s<\epsilon/3G(2t)$, $\sum_{i>k}\lambda_i^t<\epsilon/3G(2s)$, and $\sum_{i>k}\lambda_i^{s+t}<\epsilon/3$.  Then
\[||\hat p_s(\omega,\xi)-\sum_{i\leq k}\lambda_i^s\nu_i(\omega)\nu_i(\xi)||_{L^2}<\epsilon/3G(2t)\]
and similarly for $\hat p_t$ and $\hat p_{s+t}$.  This in turn implies that
\begin{align*}
&  ||\int \left[\hat p_s(\omega,\xi)\hat p_t(\xi,\omega')- \left(\sum_{i\leq k}\lambda_i^s\nu_i(\omega)\nu_i(\xi)\right)\left(\sum_{j\leq k}\lambda_j^t\nu_j(\xi)\nu_j(\omega')\right)\right]d\xi||_{L^1}\\
\leq&||\hat p_s(\omega,\xi)\hat p_t(\xi,\omega')- \left(\sum_{i\leq k}\lambda_i^s\nu_i(\omega)\nu_i(\xi)\right)\left(\sum_{j\leq k}\lambda_j^t\nu_j(\xi)\nu_j(\omega')\right)||_{L^1}\\
\leq&||\left(\hat p_s(\omega,\xi)-\sum_{i\leq k}\lambda_i^s\nu_i(\omega)\nu_i(\xi)\right)\hat p_t(\xi,\omega')||_{L^1}\\
&\ \ \ \ +||\left(\sum_{i\leq k}\lambda_i^s\nu_i(\omega)\nu_i(\xi)\right)\left(\hat p_t(\xi,\omega')-\sum_{j\leq k}\lambda_j^t\nu_j(\xi)\nu_j(\omega')\right)||_{L^1}\\
\leq&||\hat p_s(\omega,\xi)-\sum_{i\leq k}\lambda_i^s\nu_i(\omega)\nu_i(\xi)||_{L^2}||\hat p_t(\xi,\omega')||_{L^2}\\
&\ \ \ \ +||\sum_{i\leq k}\lambda_i^s\nu_i(\omega)\nu_i(\xi)||_{L^2}||\left(\hat p_t(\xi,\omega')-\sum_{j\leq k}\lambda_j^t\nu_j(\xi)\nu_j(\omega')\right)||_{L^2}\\
\leq&\frac{\epsilon}{3G(2t)}G(2t)+G(2s)\frac{\epsilon}{3G(2s)}\\
&=\frac{2\epsilon}{3}.
\end{align*}

Observe that
\begin{align*}
\int \left(\sum_{i\leq k}\lambda_i^s\nu_i(\omega)\nu_i(\xi)\right)\left(\sum_{j\leq k}\lambda_j^t\nu_j(\xi)\nu_j(\omega')\right)d\xi
&=\sum_{i\leq k}\lambda_i^s\lambda_i^t\nu_i(\omega)\nu_i(\omega')\int\nu_i(\xi)^2d\xi\\
&\ \ \ \ \ \ \ +\sum_{i\neq j, i,j\leq k}\lambda_i^s\lambda_j^t\nu_i(\omega)\nu_j(\omega')\int\nu_i(\xi)\nu_j(\xi)d\xi\\
&=\sum_{i\leq k}\lambda_i^{s+t}\nu_i(\omega)\nu_i(\omega').\\
\end{align*}

Therefore as in the previous lemma,
\begin{align*}
  ||\hat p_{s+t}(\omega,\omega')-\int \hat p_s(\omega,\xi)\hat p_t(\xi,\omega')d\xi||_{L^1}
&\leq||\hat p_{s+t}(\omega,\omega')-\sum_{i\leq k}\lambda_i^{s+t}\nu_i(\omega)\nu_i(\omega')||_{L^1}\\
&\ \ \ \ \ \ \ \ +||\sum_{i\leq k}\lambda_i^{s+t}\nu_i(\omega)\nu_i(\omega')-\int \hat p_s(\omega,\xi)\hat p_t(\xi,\omega')d\xi||_{L^1}\\
&\leq \epsilon/3+2\epsilon/3\\
&=\epsilon.
\end{align*}
\end{proof}

\begin{lemma}[Boundedness and Normalization of $\hat p_t$]
  For every $t>0$, $\int\hat p_t(\omega,\omega)d\omega$ is finite and $\int\hat p_1(\omega,\omega)d\omega=2$.
\end{lemma}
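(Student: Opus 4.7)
The plan is to evaluate $\int \hat p_t(\omega,\omega)\,d\omega$ via the eigenvalue decomposition already constructed, and then read off both conclusions. First, applying the diagonal Chapman-Kolmogorov property (just proved) at $s=t/2$ together with symmetry of $\hat p_{t/2}$ yields
\[\hat p_t(\omega,\omega)=\int \hat p_{t/2}(\omega,\xi)\hat p_{t/2}(\xi,\omega)\,d\xi=\int \hat p_{t/2}(\omega,\xi)^2\,d\xi\]
for almost every $\omega$, so integrating over $\omega$ gives $\int \hat p_t(\omega,\omega)\,d\omega=\|\hat p_{t/2}\|_{L^2(\pi\times\pi)}^2$. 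Since $\hat p_{t/2}=\sum_i\lambda_i^{t/2}\nu_i\otimes\nu_i$ and the family $\{\nu_i\otimes\nu_i\}_i$ is orthonormal in $L^2(\pi\times\pi)$ (because the $\nu_i$ are $L^2$-normalized and pairwise orthogonal), this squared norm equals $\sum_i\lambda_i^t$.

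Boundedness is then essentially immediate. For each finite $k$, Theorem~\ref{thm:ultraproduct} applied to the (constant) sequences $\lambda_{(n),i}$ gives $\sum_{i\le k}\lambda_i^t=\lim_\mathcal{U}\sum_{i\le k}\lambda_{(n),i}^t\le\lim_\mathcal{U}G_{(n)}(t)\le B_t$, and letting $k\to\infty$ yields $\sum_i\lambda_i^t\le B_t<\infty$.

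For normality ($t=1$) the upper bound $\sum_i\lambda_i\le 2$ is the boundedness argument specialized using $G_{(n)}(1)=2$. The reverse inequality is the main point: we must prevent ``escape of mass'' into a large number of tiny $\lambda_{(n),i}$ that vanish under $\lim_\mathcal{U}$. The idea is to use the hypothesis $G_{(n)}(t)\le B_t$ at some fixed $t\in(0,1)$ to force mass onto a uniformly bounded number of not-too-small eigenvalues. Given $\epsilon>0$, at most $B_t/\epsilon^t$ of the $\lambda_{(n),i}$ can exceed $\epsilon$, so with $N=\lceil B_t/\epsilon^t\rceil$ we have $\lambda_{(n),i}\le\epsilon$ for every $i>N$ (the eigenvalues being listed in decreasing order). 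On this tail $\lambda_{(n),i}\le\epsilon^{1-t}\lambda_{(n),i}^t$, so
\[\sum_{i>N}\lambda_{(n),i}\le\epsilon^{1-t}\sum_{i>N}\lambda_{(n),i}^t\le\epsilon^{1-t}B_t,\]
which forces $\sum_{i\le N}\lambda_{(n),i}\ge 2-\epsilon^{1-t}B_t$. Taking $\lim_\mathcal{U}$ of this finite sum yields $\sum_{i\le N}\lambda_i\ge 2-\epsilon^{1-t}B_t$, hence $\sum_i\lambda_i\ge 2-\epsilon^{1-t}B_t$; sending $\epsilon\to 0$ completes the proof.

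The only real obstacle is this normality lower bound: the upper bound is routine, but ruling out the possibility that a nontrivial portion of $\sum_i\lambda_{(n),i}$ is carried by eigenvalues that shrink to $0$ with $n$ requires genuine input from the boundedness hypothesis at small $t<1$. The tail estimate above is exactly the quantitative form of that observation.
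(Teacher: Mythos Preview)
Your argument is correct and follows the same overall strategy as the paper: compute $\int \hat p_t(\omega,\omega)\,d\omega=\sum_i\lambda_i^t$, bound partial sums by $B_t$ for finiteness, and for the normality lower bound use the hypothesis $G_{(n)}(t)\le B_t$ at some $t<1$ to show the tail $\sum_{i>N}\lambda_{(n),i}$ is small uniformly in $n$.

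The executions of the lower bound differ in detail. The paper works specifically at $t=1/2$, chooses $k$ so that the \emph{limit} eigenvalue $\lambda_k$ is small (invoking the earlier lemma $\lambda_i\to 0$), and then bounds $\sum_{i>k}\lambda_{(n),i}=\sum_{i>k}(\lambda_{(n),i}^{1/2})^2$ via a maximization argument under the constraints $\sum x_i\le B_{1/2}$ and $x_i\le\epsilon/B_{1/2}$. Your version is more direct: you pick $N=\lceil B_t/\epsilon^t\rceil$ uniformly in $n$ straight from the counting bound, and replace the maximization by the one-line inequality $\lambda\le\epsilon^{1-t}\lambda^t$ valid on the tail. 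This is a bit cleaner and does not need the preliminary fact $\lambda_i\to 0$; the paper's optimization is really a disguised form of the same H\"older-type estimate specialized to $t=1/2$. Your opening derivation of $\int\hat p_t(\omega,\omega)\,d\omega=\sum_i\lambda_i^t$ via diagonal Chapman--Kolmogorov and $\|\hat p_{t/2}\|_{L^2}^2$ is correct but unnecessary: setting $\omega'=\omega$ in the defining series and integrating term by term (all terms non-negative, so monotone convergence applies) gives it immediately, which is what the paper means by ``by definition.''
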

\begin{proof}
We have
\[\int\hat p_t(\omega,\omega)d\omega=\sum_i\lambda_i^t\]
by definition.  Choosing $k$ large enough that $\sum_{i>k}\lambda_i^t<\epsilon$ we have
\[\int\hat p_t(\omega,\omega)d\omega=\sum_{i\leq k}\lambda_i^t+\sum_{i>k}\lambda_i^t<\sum_{i\leq k}\lambda_i^t+\epsilon.\]
Since for every $n$, $\sum_{i\leq k}\lambda_{(n),i}^t\leq\sum_i\lambda_{(n),i}^t\leq B_t$, we have 
\[\int \hat p_t(\omega,\omega)d\omega\leq B_t+\epsilon.\]

To prove equality for $t=1$ rather than a bound, we have to work a bit harder.  Choose $k$ large enough that $\lambda_k<\epsilon^2/2B^2_{1/2}$.  Consider any large enough $n$ such that $\lambda_{(n),k}\leq\epsilon^2/B^2_{1/2}$.  We have $\sum_{i>k}\lambda_{(n),i}^{1/2}\leq\sum_i\lambda_{(n),i}^{1/2}\leq B_{1/2}$ and for each $i>k$, $\lambda_{(n),i}^{1/2}\leq\epsilon/B_{1/2}$.  Among all possible values for the $\lambda_{(n),i}^{1/2}$, we maximize $\sum_{i>k}(\lambda^{1/2}_{(n),i})^2$ by choosing as many as possible as large as possible and equal, and the rest $0$.  But at this maximum, the first $B^2_{1/2}/\epsilon$ values of $\lambda_{(n),i}$ are each $\epsilon/B_{1/2}$, and the remainder are $0$; this means that $\sum_{i>k}(\lambda^{1/2}_{(n),i})^2=\sum_{i>k}\lambda_{(n),i}\leq (B^2_{1/2}/\epsilon)(\epsilon/B_{1/2})^2=\epsilon$.  Since $\sum_i\lambda_{(n),i}=2$, it follows that $2-\sum_{i\leq k}\lambda_{(n),i}\leq\epsilon$.  Since this holds for almost every $n$, we have $2-\sum_{i\leq k}\lambda_i\leq\epsilon$.  Since this holds for every $\epsilon>0$, $\sum_{i}\lambda_i=2$.
\end{proof}

\begin{lemma}
  For every $t>0$, $\lim_{s\rightarrow t}||\hat p_t-\hat p_s||_{L^2}=0$.
\end{lemma}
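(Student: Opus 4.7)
The plan is to reduce the claim to a statement about the scalar sequence $(\lambda_i^t - \lambda_i^s)^2$ via Parseval. Since the $\nu_i$ are orthonormal in $L^2(\pi)$, the rank-one kernels $\nu_i(\omega)\nu_i(\omega')$ are orthonormal in $L^2(\pi \times \pi)$; this plus the summability $\sum_i \lambda_i^{2t} \leq G(2t) \leq B_{2t}$ (from the preceding boundedness lemma) shows that the defining series for $\hat p_t$ converges in $L^2$ and satisfies
\[
\|\hat p_t - \hat p_s\|_{L^2(\pi\times\pi)}^2 \;=\; \sum_i (\lambda_i^t - \lambda_i^s)^2.
\]

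Given $\epsilon > 0$, I will split this sum at a cutoff $k$. For the tail, I use that each $\lambda_i \in [0,1]$, so for $s$ in the range $[t/2, 2t]$ the pointwise bound
\[
(\lambda_i^t - \lambda_i^s)^2 \;\leq\; \lambda_i^{2\min(s,t)} \;\leq\; \lambda_i^t
\]
holds. The tail $\sum_{i>k}\lambda_i^t$ equals $G(t) - \sum_{i\leq k}\lambda_i^t$, and since the preceding boundedness lemma gives $\sum_i \lambda_i^t \leq B_t < \infty$, this tail can be made less than $\epsilon^2/2$ by choosing $k$ large (independently of $s$, as long as $s \in [t/2, 2t]$).

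For the head with $i \leq k$, the exponential function $x \mapsto \lambda_i^x$ is continuous on each fixed $\lambda_i \in [0,1]$, so each of the finitely many terms $(\lambda_i^t - \lambda_i^s)^2$ tends to $0$ as $s \to t$. Hence for all $s$ sufficiently close to $t$ the head sum is below $\epsilon^2/2$, and adding the two estimates gives $\|\hat p_t - \hat p_s\|_{L^2}^2 < \epsilon^2$, as required.

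There is no real obstacle beyond being careful about the range of $s$ used in the tail bound (restricting to an interval like $[t/2, 2t]$, which costs nothing since we only need $s \to t$); the argument is essentially a Parseval-flavored version of Lemma \ref{thm:close_all}, made cleaner by the fact that here we already have orthonormality of the $\nu_i$ and the uniform tail bound on $\sum_i \lambda_i^t$.
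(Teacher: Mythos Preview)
Your proposal is correct and is essentially the same approach as the paper's: the paper's proof is the one-liner ``Immediate from the definition of $\hat p_t$ and Lemma~\ref{thm:close_all},'' and your argument is exactly a reproof of Lemma~\ref{thm:close_all} in this context, using Parseval to get the clean identity $\|\hat p_t-\hat p_s\|_{L^2}^2=\sum_i(\lambda_i^t-\lambda_i^s)^2$ in place of the triangle-inequality splitting used there. The head/tail split at a cutoff $k$, the uniform tail control for $s$ near $t$ via $\sum_i\lambda_i^t<\infty$, and continuity on the finite head are the same ingredients.
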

\begin{proof}
  Immediate from the definition of $\hat p_t$ and Lemma \ref{thm:close_all}.
\end{proof}

Putting these together, we see that $(\Omega,\mathcal{B},\pi),\hat p_t$ is a pseudofinite Markov chain.  For each $n$, $(\mathbf{X}_{(n),i,j})_{i,j\in\mathbb{N}}$ be the density array corresponding to $\Omega_{(n)},\mathbf{Q}_{(n)}$ and let $(\mathbf{X}_{*,i,j})_{i,j\in\mathbb{N}}$ be the density array corresponding to $(\Omega,\mathcal{B},\pi),\hat p_t$.  We now turn to showing that $(\mathbf{X}_{*,i,j})_{i,j\in\mathbb{N}}$ is the limit of a convergent subsequence of $(\mathbf{X}_{(n),i,j})_{i,j\in\mathbb{N}}$.

\begin{theorem}
Let $f:\mathbb{R}^{m+1}\rightarrow\mathbb{R}$ be bounded and continuous.  Let $i_0,\ldots,i_m, j_0,\ldots, j_m, t_0,\ldots t_m$ be given.  Then there is an infinite set $S$ such that 
\[\lim_{n\in S}\mathbb{E}(f(\mathbf{X}_{(n),i_0,j_0}(t_0),\ldots,\mathbf{X}_{(n),i_m,j_m}(t_m)))=\mathbb{E}(f(\mathbf{X}_{*,i_0,j_0}(t_0),\ldots,\mathbf{X}_{*,i_m,j_m}(t_m))).\]
\end{theorem}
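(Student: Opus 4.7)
The plan is to expand $\widehat{\mathbf{P}}_{(n),t}$ and $\hat p_t$ in their eigenfunction bases, truncate after $k$ eigenfunctions to turn the integrand into a polynomial in finitely many $\nu_{(n),i}$ evaluations, and invoke Theorem \ref{thm:ultraproduct} to obtain convergence of the resulting finite collection of integrals along some infinite set $S$. Write $\tilde p_{(n),t}^k(\omega,\omega')=\sum_{i\leq k}\lambda_{(n),i}^t\nu_{(n),i}(\omega)\nu_{(n),i}(\omega')$, $\tilde p_t^k(\omega,\omega')=\sum_{i\leq k}\lambda_i^t\nu_i(\omega)\nu_i(\omega')$, and $\mathbf{Y}_{(n),i,j}^k(t)=\tilde p_{(n),t}^k(\omega_i,\omega_j)$, $\mathbf{Y}_{*,i,j}^k(t)=\tilde p_t^k(\omega_i,\omega_j)$.

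Fix $\epsilon>0$ and let $\delta$ be as in Lemma \ref{thm:convergence_in_dist_for_l2} for $f$. First I would choose $k$ large enough that $\lambda_{k+1}^{t_l}$ is small for each $l$; since Theorem \ref{thm:ultraproduct} applied to the constants $\lambda_{(n),i}$ yields $\lim_{n\in S'}\lambda_{(n),k+1}=\lambda_{k+1}$ along some infinite $S'$, the estimate
\[\|\widehat{\mathbf{P}}_{(n),t_l}-\tilde p_{(n),t_l}^k\|_{L^2}^2=\sum_{i>k}\lambda_{(n),i}^{2t_l}\leq\lambda_{(n),k+1}^{t_l}G_{(n)}(t_l)\leq\lambda_{(n),k+1}^{t_l}B_{t_l}\]
ensures $\|\widehat{\mathbf{P}}_{(n),t_l}-\tilde p_{(n),t_l}^k\|_{L^2}<\delta$ for $n\in S'$ large, and similarly $\|\hat p_{t_l}-\tilde p_{t_l}^k\|_{L^2}<\delta$. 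Applying Lemma \ref{thm:convergence_in_dist_for_l2} separately on the two probability spaces gives $|\mathbb{E}(f(\mathbf{X}^{(n)}))-\mathbb{E}(f(\mathbf{Y}^{(n),k}))|<\epsilon/3$ and the analogous bound for the limit.

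Next, each coordinate $\mathbf{Y}_{(n),i_l,j_l}^k(t_l)$ has second moment at most $G_{(n)}(2t_l)\leq B_{2t_l}$, so Chebyshev gives an $M$, uniform in $n$, such that the probability any coordinate exceeds $M$ is below $\epsilon/(12(\|f\|_\infty+1))$. I would then use Stone--Weierstrass on $[-M,M]^{m+1}$ to pick a polynomial $P$ with $|f-P|<\epsilon/12$ on the box; combining with the tail estimate yields $|\mathbb{E}(f(\mathbf{Y}^{(n),k}))-\mathbb{E}(P(\mathbf{Y}^{(n),k}))|<\epsilon/6$, and likewise for the limit.

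Finally, $\mathbb{E}(P(\mathbf{Y}^{(n),k}))$ expands into a finite sum of terms, each a product of powers of $\lambda_{(n),i'}$ with $i'\leq k$ and a multiple integral over $\pi_{(n)}^N$ of products of $\nu_{(n),i'}(\omega_a)$'s; by Fubini this multiple integral factors into a product of single integrals of the form $\int\prod_r\nu_{(n),i_r'}(\omega)\,d\pi_{(n)}(\omega)$. Theorem \ref{thm:ultraproduct} applied to $\{\nu_{(n),i}\}_{i\leq k}$ and $\{\lambda_{(n),i}\}_{i\leq k}$ then delivers an infinite $S\subseteq S'$ along which each of these finitely many integrals converges to its limit counterpart, so $\lim_{n\in S}\mathbb{E}(P(\mathbf{Y}^{(n),k}))=\mathbb{E}(P(\mathbf{Y}^{(*),k}))$, and the triangle inequality combining the three $O(\epsilon)$ estimates finishes the proof. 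The main obstacle is arranging the truncation uniformly in $n$, which relies on the boundedness assumption $G_{(n)}(t)\leq B_t$ together with the decay $\lambda_{(n),k}\to\lambda_k\to 0$ established earlier.
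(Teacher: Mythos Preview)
Your overall strategy---truncate to the first $k$ eigenfunctions, control the truncation error in $L^2$ uniformly in $n$ via the bound $\sum_{i>k}\lambda_{(n),i}^{2t}\leq\lambda_{(n),k+1}^{t}B_t$, then prove convergence for the truncated versions---matches the paper's. Where you diverge is in the final step. The paper does not pass through polynomials: it covers a compact set by small pieces $I_1,\ldots,I_d$, forms the events $\mathbf{A}_{(n),i}=\{\mathbf{Y}_{(n),k}\in I_i\}$, takes their ultraproduct limits $\mathbf{A}_i=\lim\langle\mathbf{A}_{(n),i}\rangle$, and uses Lemma~\ref{thm:convergence_in_dist} to compare both $\mathbb{E}(f(\mathbf{Y}_{(n),k}))$ and $\mathbb{E}(f(\mathbf{Y}_{*,k}))$ to the same Riemann sum $\sum_i\alpha_i r_i$. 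You instead reduce to moment convergence via Stone--Weierstrass, exploiting the ``$\int\prod_{i\in I} f_i$'' clause of Theorem~\ref{thm:ultraproduct} rather than its ``$\lim\langle A^{(n)}\rangle$'' clause. That is a legitimate alternative route.

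There is, however, a gap in your polynomial step. You claim that the Chebyshev tail bound on $\mathbf{Y}^{(n),k}$ together with $|f-P|<\epsilon/12$ on $[-M,M]^{m+1}$ yields $|\mathbb{E}(f(\mathbf{Y}^{(n),k}))-\mathbb{E}(P(\mathbf{Y}^{(n),k}))|<\epsilon/6$. But $P$ is a polynomial, hence unbounded off the box, and you have no control over $\mathbb{E}\bigl(|P(\mathbf{Y}^{(n),k})|\cdot\mathbf{1}_{\{\mathbf{Y}^{(n),k}\notin[-M,M]^{m+1}\}}\bigr)$: the uniform second moment of the coordinates does not bound this, since the degree of $P$ is only fixed after $M$ is chosen, and Weierstrass approximants on $[-M,M]^{m+1}$ can have coefficients growing badly in $M$. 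One clean repair is to first truncate the eigenfunctions themselves, replacing each $\nu_{(n),i}$ by $\max(-M,\min(M,\nu_{(n),i}))$; this costs an extra $L^2$ error that is small uniformly in $n$ (since $\|\nu_{(n),i}\|_{L^2}=1$), after which the truncated $\mathbf{Y}$ takes values in a fixed compact set and $P$ evaluated on it is bounded, so your moment argument goes through. The paper's set-partition approach sidesteps this issue entirely because indicators are bounded.
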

\begin{proof}

To simplify notation, write $\mathbf{X}_{(n)}$ for the $\mathbb{R}^{m+1}$-valued random variable
\[(\mathbf{X}_{(n),i_0,j_0}(t_0),\ldots,\mathbf{X}_{(n),i_m,j_m}(t_m))\]
and $\mathbf{X}_{*}$ for the $\mathbb{R}^{m+1}$-valued random variable
\[(\mathbf{X}_{*,i_0,j_0}(t_0),\ldots,\mathbf{X}_{*,i_m,j_m}(t_m)).\]

Let $M\geq i_r,j_r$ for all $r\leq m$.  Let $K$ be the bound on $f$ and fix $\epsilon>0$.  Fix a compact subset $C$ of $\mathbb{R}^{m+1}$ so that $\mathbb{P}(\mathbf{X}_*\not\in C)<\epsilon/4K$.  Let $\delta$ be small enough that $|x-y|<\delta$ and $x,y\in C$ implies $|f(x)-f(y)|<\epsilon/2$.

For each $k$, write $\mathbf{Y}_{*,k}$ 
for the random variable given by choosing $\omega_{i_r},\omega_{j_r}$ randomly according to $\pi$
and taking the value
\[(\sum_{i\leq k}\lambda_{i}^{t_0}\nu_{i}(\omega_{i_0})\nu_{i}(\omega_{j_0}),\ldots,\sum_{i\leq k}\lambda_{i}^{t_m}\nu_{i}(\omega_{i_m})\nu_{i}(\omega_{j_m})).\]
That is $\mathbf{Y}_{*,k}$ is the approximation to $\mathbf{X}_*$ using only eigenvectors up to $k$.  Let $\mathbf{Y}_{(n),k}$ be given analogously by
\[(\sum_{i\leq k}\lambda_{(n),i}^{t_0}\nu_{(n),i}(\omega_{i_0})\nu_{(n),i}(\omega_{j_0}),\ldots,\sum_{i\leq k}\lambda_{(n),i}^{t_m}\nu_{(n),i}(\omega_{i_m})\nu_{(n),i}(\omega_{j_m})).\]

Then $\mathbf{Y}_{*,k}$ $L^2$ converges to $\mathbf{X}_{*}$, so in particular we may fix a value of $k$ large enough that 
\[|\mathbb{E}(f(\mathbf{Y}_{*,k}))-\mathbb{E}(f(\mathbf{X}_*))|<\epsilon/2.\]

It will suffice to show that we can find an infinite set $S$ such that for $n\in S$,
 \[|\mathbb{E}(f(\mathbf{Y}_{(n),k}))-\mathbb{E}(f(\mathbf{X}_{(n)}))|<\epsilon\]
and
\[|\mathbb{E}(f(\mathbf{Y}_{*,k}))-\mathbb{E}(f(\mathbf{Y}_{(n),k}))|<\epsilon.\]

Cover $C$ with finitely many pairwise disjoint sets $I_1,\ldots,I_d$ so that $x,y\in I_i$ implies $|x-y|<\delta/2$.  For each $n$, let $\mathbf{A}_{(n),i}$ be the event that $\mathbf{Y}_{(n),k}\in I_i$ and $\mathbf{A}_{(n),0}$ be the event that $\mathbf{Y}_{(n),k}\not\in\bigcup_{i\leq d}I_i$.  Let $\mathbf{A}_i=\lim\langle \mathbf{A}_{(n),i}\rangle$.  Note that the event $\mathbf{A}_i$ for $i\in[1,d]$ implies that $\mathbf{Y}_{*,k}$ belongs to the closure of $I_i$.

Fix $r_i\in I_i$ for each $i$, let $r=\max_i |r_i|$, and let $\alpha_i=\mathbb{P}(\mathbf{A}_i)$.  There is an infinite set $S$ such that for each $n\in S$:
\begin{itemize}
\item  $|\mathbb{E}(f(\mathbf{Y}_{(n),k}))-\mathbb{E}(f(\mathbf{X}_{(n)}))|<\epsilon$,
\item For each $i\in[1,d]$, $|\mathbb{P}(\mathbf{A}_{(n),i})-\alpha_i|<\epsilon/2r_i$,
\item $|\mathbb{P}(\mathbf{A}_{(n),0})-\alpha_0|<\epsilon/4K$.
\end{itemize}

Consider any $n\in S$.  Since $\alpha_0<\epsilon/4K$, we have $\mathbb{P}(\mathbf{A}_{(n),0})<\epsilon/2K$.  Then by Lemma \ref{thm:convergence_in_dist},
\[|\mathbb{E}(f(\mathbf{Y}_{(n),k}))-\sum_{i\in[1,d]}\mathbb{P}(\mathbf{A}_{(n),i})r_i|<\epsilon\]
and since
\[|\sum_{i\in[1,d]}\mathbb{P}(\mathbf{A}_{(n),i})r_i-\sum_{i\in[1,d]}\alpha_i r_i|\leq\epsilon/2\]
we have
\[|\mathbb{E}(f(\mathbf{Y}_{(n),k}))-\sum_{i\in[1,d]}\alpha_i r_i|<3\epsilon/2.\]
On the other hand, by Lemma \ref{thm:convergence_in_dist} again,
\[|\mathbb{E}(f(\mathbf{Y}_{*,k}))-\sum_{i\in[1,d]}\alpha_ir_i|<\epsilon/2\]
and therefore 
\[|\mathbb{E}(f(\mathbf{Y}_{*,k}))-\mathbb{E}(f(\mathbf{Y}_{(n),k}))|<2\epsilon.\]

It follows that
\[|\mathbb{E}(f(\mathbf{X}_{*}))-\mathbb{E}(f(\mathbf{X}_{(n)}))|<7\epsilon/2.\]

Since we can find infinitely many such $n$ for each $\epsilon>0$, we may take a countable sequence $\epsilon_1>\epsilon_2>\cdots$ and corresponding $n_1<n_2<\cdots$, and then the set $S=\{n_1,n_2,\ldots\}$ has the property that
\[\lim_{n\in S}\mathbb{E}(f(\mathbf{X}_{(n)}))=\mathbb{E}(f(\mathbf{X}_*)).\]
\end{proof}

In particular, if the sequence $(\mathbf{X}_{(n),i,j})$ converges in distribution, for each bounded continuous $f$ and any infinite $S$,
\[\lim_{n}\mathbb{E}(f(\mathbf{X}_{(n),i_0,j_0}(t_0),\ldots,\mathbf{X}_{(n),i_m,j_m}(t_m)))=\lim_{n\in S}\mathbb{E}(f(\mathbf{X}_{(n),i_0,j_0}(t_0),\ldots,\mathbf{X}_{(n),i_m,j_m}(t_m)))\]
and so
\[\lim_{n}\mathbb{E}(f(\mathbf{X}_{(n),i_0,j_0}(t_0),\ldots,\mathbf{X}_{(n),i_m,j_m}(t_m)))=\mathbb{E}(f(\mathbf{X}_{*,i_0,j_0}(t_0),\ldots,\mathbf{X}_{*,i_m,j_m}(t_m))).\]
Therefore the sequence $(\mathbf{X}_{(n),i,j})$ converges in distribution to $(\mathbf{X}_{*,i,j})$.  This proves Theorem \ref{thm:main}.

\section{Sampling}\label{sec:sampling}

\begin{reptheorem}{thm:main2}
Let $(\mathbf{X}_{*,i,j})$ be a density array.  There is a bounded sequence of finite Markov chains $\Omega_{(n)},\mathbf{Q}_{(n)}$ such that, taking $(\mathbf{X}_{(n),i,j})$ to be the density array corresponding to $\Omega_{(n)},\mathbf{Q}_{(n)}$, the density arrays $(\mathbf{X}_{(n),i,j})$ converge in distribution to $(\mathbf{X}_{*,i,j})$.
\end{reptheorem}
\begin{proof}
We construct finite Markov chains $\Omega_{(n)},\mathbf{Q}_{(n)}$ by taking $\Omega_{(n)}$ to be an $n$ point set $\{\omega_0,\ldots,\omega_{n-1}\}$ of $\Omega$ and setting the values $\hat p^{(n)}_t(\omega_i,\omega_j)$ according to $\mathbf{X}_{*,i,j}(t)$.  When $n$ is large, $\Omega_{(n)},\widehat{p}_t^{(n)}$ will with high probability almost define a finite Markov chain, but there my be some error---for instance, the rows will sum to a number near $1$, but not to exactly $1$.

Nonetheless, we can let $(\mathbf{Y}_{(n),i,j})$ be the corresponding random variables given by choosing $\xi_1,\xi_2,\ldots$ from $\Omega_{(n)}$ uniformly at random and taking $\mathbf{Y}_{(n),i,j}(t)=\hat p^{(n)}_t(\xi_i,\xi_j)$.  
This selection induces a random function $\boldsymbol{\rho}_n:\mathbb{N}\rightarrow\mathbb{N}$ so $\xi_i=\omega_{\boldsymbol{\rho}(i)}$, and so in particular $\mathbf{Y}_{(n),i,j}(t)=\mathbf{X}_{*,\boldsymbol{\rho}_n(i),\boldsymbol{\rho}_n(j)}(t)$.

Fix a bounded and continuous function $f:\mathbb{R}^{m+1}\rightarrow\mathbb{R}$ and values $i_0,\ldots,i_m,j_0,\ldots,j_m,t_0,\ldots,t_m$, and consider the sequence of values
\[f(\mathbf{Y}_{(n),i_0,j_0}(t_0),\ldots,\mathbf{Y}_{(n),i_m,j_m}(t_m)).\]
Note that $\mathbb{E}(f(\mathbf{Y}_{(n),i_0,j_0}(t_0),\ldots,\mathbf{Y}_{(n),i_m,j_m}(t_m)))$ is the average over all choices of $\boldsymbol{\rho}_n$,
\[\mathbb{E}(f(\mathbf{Y}_{(n),i_0,j_0}(t_0),\ldots,\mathbf{Y}_{(n),i_m,j_m}(t_m)))=\frac{1}{r}\sum_{\boldsymbol{\rho}_n}f(\mathbf{X}_{*,\boldsymbol{\rho}_n(i_0),\boldsymbol{\rho}_n(j_0)}(t_0),\ldots,\mathbf{X}_{*,\boldsymbol{\rho}_n(i_m),\boldsymbol{\rho}_n(j_m)}(t_m)).\]
When $n$ is much larger than $m$, almost every choice of $\boldsymbol{\rho}_n$ is injective on the elements $\{i_0,\ldots,i_m,j_0,\ldots,j_m,t_0,\ldots,t_m\}$, so
\[\mathbb{E}(f(\mathbf{X}_{*,\boldsymbol{\rho}_n(i_0),\boldsymbol{\rho}_n(j_0)}(t_0),\ldots,\mathbf{X}_{*,\boldsymbol{\rho}_n(i_m),\boldsymbol{\rho}_n(j_m)}(t_m)))=\mathbb{E}(f(\mathbf{X}_{*,i_0,j_0}(t_0),\ldots,\mathbf{X}_{*,i_m,j_m}(t_m))).\]
Since $(\mathbf{X}_{*,i,j})$ is dissociated, the value of $f(\mathbf{X}_{*,\boldsymbol{\rho}_n(i_0),\boldsymbol{\rho}_n(j_0)}(t_0),\ldots,\mathbf{X}_{*,\boldsymbol{\rho}_n(i_m),\boldsymbol{\rho}_n(j_m)}(t_m))$ for two different values of $\boldsymbol{\rho}_n$ are independent if the images of $\{i_0,\ldots,i_m,j_0,\ldots,j_m,t_0,\ldots,t_m\}$ are disjoint.  Therefore a standard Azuma's inequality argument shows that with probability exponentially approaching $1$,
\[\mathbb{E}(f(\mathbf{Y}_{(n),i_0,j_0}(t_0),\ldots,\mathbf{Y}_{(n),i_m,j_m}(t_m)))=\mathbb{E}(f(\mathbf{X}_{*,i_0,j_0}(t_0),\ldots,\mathbf{X}_{*,i_m,j_m}(t_m)).\]
This shows that the countable matrix of random variables $(\mathbf{Y}_{(n),i,j}(q))$ with $q$ rational converges in distribution to $(\mathbf{X}_{*,i,j}(q))$.  However $\hat p^{(n)}_t$ is not exactly a finite Markov chain (it need only be ``nearly stochastic'' for instance), so it remains to find a finite Markov chain close to $\hat p^{(n)}_t$.  There is a natural choice---treat $\pi_{(n)}(\omega)=\sum_\xi\hat p^{(n)}_1(\omega,\xi)$ as the weight given to $\omega$, and normalize by multiplying by $\frac{1}{\pi_{(n)}(\omega)}$.  Since this should, for most $\omega$, be very close $n$, it should not be surprising that this normalization does not change the limiting distribution.  Checking this, especially checking that it does change the distribution on large products of the matrix, occupies the remainder of the proof.

By the Chapman-Kolmogorov property and exchangeability, with probability $1$, for any $i,j$ we have
\[\mathbf{X}_{*,i,j}(2)=\lim_{n\rightarrow\infty}\frac{1}{n}\sum_{k<n}\mathbf{X}_{*,i,k}(1)\mathbf{X}_{*,k,j}(1).\]
Corresponding to this, when $n$ is sufficiently large, with very high probability we have
\[\hat p^{(n)}_2(\omega,\omega')\approx\frac{1}{n}\sum_{\xi\in\Omega}\hat p^{(n)}_1(\omega,\xi)\hat p^{(n)}_1(\xi,\omega').\]
More strongly, we can do the following.  For any $S\subseteq\mathbb{N}$, define
$\delta(S)=\lim_{n\rightarrow\infty}\frac{|S\cap [0,n]|}{n}$
if this exists.  When $S$ is defined in a suitably exchangeable way (as all sets $S$ we consider will be), $\delta(S)$ exists with probability $1$.  Observe that since the second moment of $\mathbf{X}_{*,i,j}(1)$ exists (by the same argument as for pseudofinite Markov chains), let $I_{>B}=\{k\mid \mathbf{X}_{*,i,k}(1)\mathbf{X}_{*,k,j}(1)>B\}$ for $B$ sufficiently big, and with probability $1$, $\delta(I_{>B})<\epsilon$ (where $\epsilon$ depends on $B$).  Consider, for $0\leq c\leq \lceil B/\epsilon\rceil$, $I_c=\{k\mid \mathbf{X}_{*,i,k}(1)\mathbf{X}_{*,k,j}(1)\in [c\epsilon,(c+1)\epsilon)\}$.  When $\epsilon$ is small, 
\[\mathbf{X}_{*,i,j}(2)\approx\sum_{c}c\epsilon \delta(I_c).\]
Further we can define $I'_c=\{\xi\in\Omega\mid |\hat p^{(n)}_1(\omega,\xi)\hat p^{(n)}_1(\xi,\omega')-\delta|<\epsilon\}$, and with probability exponentially approaching $1$, a standard Chernoff argument ensures that when $n$ is large, $\mu(I'_c)$ is very close to $\delta(I_c)$ for all $\omega,\omega'$ simultaneously.  In particular, if $E\subseteq\Omega$ with $|E|/n$ small enough (depending on $B,\epsilon$), we still have
\[\hat p^{(n)}_2(\omega,\omega')\approx\frac{1}{n}\sum_{\xi\in\Omega\setminus E}\hat p^{(n)}_1(\omega,\xi)\hat p^{(n)}_1(\xi,\omega').\]

With probability $1$, $\hat p^{(n)}_1$ is a symmetric matrix.  Set $\pi_{(n)}(\omega)=\sum_{\omega'\in\Omega_{(n)}}\hat p^{(n)}_1(\omega,\omega')$.  We set $\mathbf{P}_{(n),1}(\omega,\omega')=\frac{\hat p^{(n)}_1(\omega,\omega')}{\pi_{(n)}(\omega)}$.  Then $\mathbf{P}_{(n),1}$ is a positive definite stochastic matrix and we can take $\mathbf{Q}_{(n)}$ to be its matrix logarithm.  

Let $\gamma=\sum_\omega\pi_{(n)}(\omega)$.  (With high probability, $\gamma$ is close to $n^2$.)  Then $\frac{1}{\gamma}\pi_{(n)}$ is the stationary distribution of $\mathbf{Q}_{(n)}$, so $\widehat{\mathbf{P}}_{(n),1}(\omega,\omega')=\frac{\gamma\hat p^{(n)}_1(\omega,\omega')}{\pi_{(n)}(\omega)\pi_{(n)}(\omega')}$.  It remains to show that the densities arrays corresponding to $\Omega_{(n)},\mathbf{Q}_{(n)}$ converge in distribution to $(\mathbf{X}_{*,i,j})$.

Let $E_\epsilon$ be those $\xi$ such that $|\pi_{(n)}(\xi)-n|\geq\xi$.  By the stochasticity of $(\mathbf{X}_{*,i,j})$, with high probability the set of $E_\epsilon$ has size $<\epsilon n$.  Therefore for any $\omega,\omega'$,
\begin{align*}
\widehat{\mathbf{P}}_{(n),2}(\omega,\omega')
&=\sum_{\xi}\widehat{\mathbf{P}}_{(n),1}(\omega,\xi)\widehat{\mathbf{P}}_{(n),1}(\xi,\omega')\frac{1}{\gamma}\pi_{(n)}(\xi)\\
&=\sum_\xi\frac{\gamma\hat p^{(n)}_1(\omega,\xi)\hat p^{(n)}_1(\xi,\omega')}{\pi_{(n)}(\omega)\pi_{(n)}(\xi)\pi_{(n)}(\omega')}\\
&=\frac{\gamma}{\pi_{(n)}(\omega)\pi_{(n)}(\omega')}\sum_\xi\frac{\hat p^{(n)}_1(\omega,\xi)\hat p^{(n)}_1(\xi,\omega')}{\pi_{(n)}(\xi)}\\
&\approx\frac{\gamma}{\pi_{(n)}(\omega)\pi_{(n)}(\omega')}\sum_{\xi\in\Omega\setminus E_\epsilon}\frac{\hat p^{(n)}_1(\omega,\xi)\hat p^{(n)}_1(\xi,\omega')}{n}\\
&\approx\frac{\gamma \hat p^{(n)}_2(\omega,\omega')}{\pi_{(n)}(\omega)\pi_{(n)}(\omega')}.
\end{align*}

We can iterate the same argument and conclude that for each $k$, when $n$ is sufficiently large (depending on $k$), with very high probability we have
\[\widehat{\mathbf{P}}_{(n),k}(\omega,\omega')\approx \frac{\gamma\hat p^{(n)}_k(\omega,\omega')}{\pi_{(n)}(\omega)\pi_{(n)}(\omega')}.\] 
Similarly, $\widehat{\mathbf{P}}_{(n),1}(\omega,\omega')$ is close to $\frac{\gamma}{\pi_{(n)}(\omega)\pi_{(n)}(\omega')}\sum_\xi\hat p^{(n)}_{1/2}(\omega,\xi)\hat p^{(n)}_{1/2}(\xi,\omega')$, so by the continuity of the matrix square root, $\frac{\gamma\hat p^{(n)}_{1/2}(\omega,\omega')}{\pi_{(n)}(\omega)\pi_{(n)}(\omega')}$ is close to $\widehat{\mathbf{P}}_{(n),1/2}(\omega,\omega')$.  Iterating and combining these two arguments, we get that $\frac{\hat p^{(n)}_{t}(\omega,\omega)}{\pi_{(n)}(\omega,\omega')}\approx\widehat{\mathbf{P}}_{(n),t}(\omega,\omega')$ for all rational $t$, and therefore with probability $1$, for each $t$, $\hat p^{(n)}_t-\widehat{\mathbf{P}}_{(n),t}$ converges to $0$ as $n$ gets large.

Therefore for rational $t$ the sequence of random variables $\mathbf{Y}_{(n),i,j}(t)-\mathbf{X}_{(n),i,j}(t)$ converges in $L^2$ (and even $L^\infty$) norm to $0$, so also in distribution to $0$, and therefore $\mathbf{X}_{(n),i,j}(t)$ converges in distribution to the same random variable as $\mathbf{Y}_{(n),i,j}(t)$, namely $\mathbf{X}_{*,i,j}(t)$.

To obtain convergence for all irrational $s$ simultaneously, fix an $s$, a bounded continuous $f$ and an $\epsilon>0$.  To simplify notation, we assume $f:\mathbb{R}\rightarrow\mathbb{R}$; the general case follows by the same argument.  Let $i,j$ be given.  There is a $\delta>0$ so that if $||\mathbf{X}-\mathbf{X}'||_{L^2}<\delta$ then $\left|\mathbb{E}(f(\mathbf{X}))-\mathbb{E}(f(\mathbf{X}'))\right|<\epsilon/3$.  We may choose a rational $t$ close enough to $s$ that $||\mathbf{X}_{*,i,j}(t)-\mathbf{X}_{*,i,j}(s)||_{L^2}<\delta$ and also for sufficiently large $n$
\[||\widehat{\mathbf{P}}_{(n),t}-\widehat{\mathbf{P}}_{(n),s}||_{L^2}=
||e^{t\mathbf{Q}_{(n)}}-e^{s\mathbf{Q}_{(n)}}||_{L^2}
\leq (s-t)||\mathbf{Q}_{(n)}||_{L^2}||\widehat{\mathbf{P}}_{(n),t}||_{L^2}||\widehat{\mathbf{P}}_{(n),s}||_{L^2}
<\delta.\]
Therefore when $n$ is sufficiently large we have
\begin{align*}
  \left|\mathbb{E}(f(\mathbf{X}_{*,i,j}(s)))-\mathbb{E}(f(\mathbf{X}_{(n),i,j}(s)))\right|
&\leq\left|\mathbb{E}(f(\mathbf{X}_{*,i,j}(s)))-\mathbb{E}(f(\mathbf{X}_{*,i,j}(t)))\right|\\
&\ \ \ \ +\left|\mathbb{E}(f(\mathbf{X}_{*,i,j}(t)))-\mathbb{E}(f(\mathbf{X}_{(n),i,j}(t)))\right|\\
&\ \ \ \ +\left|\mathbb{E}(f(\mathbf{X}_{(n),i,j}(t)))-\mathbb{E}(f(\mathbf{X}_{(n),i,j}(s)))\right|\\
&<\epsilon/3+\epsilon/3+\epsilon/3\\
&=\epsilon.
\end{align*}
\end{proof}

\section{Uniqueness}

We would like to additionally have the property that a pseudofinite Markov chain is determined by its density array: that two pseudofinite Markov chains with the same density array are actually isomorphic.  This is not the case, as an easy example shows.

Consider the pseudofinite Markov chain where $\Omega=\{a,b\}$ with $\pi(\{a\})=2/3$ and $\pi(\{b\})=1/3$ so the eigenvectors are the constant, the eigenvector $\nu_1(a)=\sqrt{2}/2$ and $\nu_1(b)=-\sqrt{2}$ with eigenvalue $\lambda_1=1/2$, and the eigenvector $\nu_2=-\nu_1$ with eigenvalue $\lambda_2=1/2$.  (That is, an arbitrary chain with two points with different measures.)

 We could modify this chain by replacing $a$ with an uncountable ``blob'' of points which mix very rapidly: $\Omega'=A\cup\{b\}$ where $(A,\lambda)$ is some probability measure space with $A$ uncountable, $\pi'(S)=(2/3)\lambda(S)$ when $S\subseteq A$, $\pi'(\{b\})=1/3$, and the eigenvectors are the constant, $\nu'_1(a)=\sqrt{2}/2$ for $a\in A$ and $\nu'_1(b)=-\sqrt{2}$ with eigenvalue $\lambda_1=1/2$, and $\nu'_2=-\nu'_1$ with eigenvalue $\lambda_2=1/2$.  These chains have the same density array, but are clearly not isomorphic.  Moreover, by blowing up $b$ instead of $A$---that is, taking $\Omega''=\{a\}\cup B$ and defining $\pi'',\nu''_1$ analogously---we can get two chains with the same density array such that neither embeds in the other.

The solution is take pseudofinite Markov chains with some additional properties to be canonical representatives of each density array.  (Our approach is similar to that taken in \cite{MR2594615} for graph limits.)

\begin{definition}
  Let $(\Omega,\mathcal{B},\pi),\hat p_t$ be a pseudofinite Markov chain with $\hat p_t(\omega,\omega')=\sum_i\lambda_i^t\nu_i(\omega)\nu_i(\omega')$.  A \emph{potential type} is a function $q:\mathbb{N}\rightarrow\mathbb{R}$.  We write $tp(\omega)$, the \emph{type} of $\omega$, for the function $tp(\omega)(i)=\nu_i(\omega)$.  We say $\omega$ \emph{realizes} $q$ if $q(i)=\nu_i(\omega)$ for all $i$.

  For $I\subseteq\mathbb{N}$ finite and $\epsilon>0$, the points which \emph{$I,\epsilon$-almost realize} $q$ are those $\omega\in\Omega$ such that for every $i\in I$, $|q(i)-\nu_i(\omega)|<\epsilon$.  $q$ is a \emph{wide type} if for every $I,\epsilon$, the set of $\omega$ which $I,\epsilon$-almost realize $q$ has positive measure.

  We say $(\Omega,\mathcal{B},\pi),\hat p_t$ is \emph{saturated} if whenever $q$ is a wide type, there is an $\omega\in\Omega$ realizing $q$.  We say $(\Omega,\mathcal{B},\pi),\hat p_t$ is \emph{twin-free} if whenever $q$ is a type, there is at most one $\omega\in\Omega$ realizing $q$.
\end{definition}

We say $\omega\in \Omega$ has wide type exactly if $tp(\omega)$ is a wide type.

Our definition of type is motivated by the model theoretic notion of the same name (and indeed, a type in our sense would be a partial type if we represented our Markov chains as first-order structures in an appropriate language).  Informally, a wide type is a point which ``ought to'' exist, in these sense that it is a limit of many points which are present.  Saturation says that all the points which should exist are actually present, and twin-freeness says that we don't have multiple points which are indistinguishable.


We ignore the behavior of non-wide types because the points with non-wide type are negligible.

\begin{lemma}
  Let $(\Omega,\mathcal{B},\pi),\hat p_t$ be a pseudofinite Markov chain.  Then almost every $\omega\in\Omega$, $tp(\omega)$ is wide.
\end{lemma}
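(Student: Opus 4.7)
The plan is to show that the complement---the set of $\omega$ whose type is not wide---is a countable union of null sets. Fix a finite set $I \subset \mathbb{N}$ and $\epsilon > 0$, and let
\[B_{I,\epsilon} = \{\omega \in \Omega : \pi(\{\omega' : |\nu_i(\omega) - \nu_i(\omega')| < \epsilon \text{ for all } i \in I\}) = 0\}.\]
A point $\omega$ has non-wide type exactly when it belongs to $B_{I,\epsilon}$ for some pair $(I,\epsilon)$; and since any real $\epsilon > 0$ dominates some rational $\epsilon'$, it suffices to consider countably many pairs $(I,\epsilon)$ with $I$ finite and $\epsilon$ rational. So the task reduces to showing $\pi(B_{I,\epsilon}) = 0$ for each such pair.

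For fixed $I$, consider the measurable map $\Phi_I : \Omega \to \mathbb{R}^I$ given by $\Phi_I(\omega) = (\nu_i(\omega))_{i \in I}$, and let $\mu_I = (\Phi_I)_* \pi$ be its pushforward. Then
\[\pi(\{\omega' : |\nu_i(\omega) - \nu_i(\omega')| < \epsilon \text{ for all } i \in I\}) = \mu_I(B(\Phi_I(\omega), \epsilon)),\]
where $B(x,\epsilon)$ denotes the open $\ell^\infty$-ball of radius $\epsilon$ in $\mathbb{R}^I$. So $B_{I,\epsilon} = \Phi_I^{-1}(Z_{I,\epsilon})$, where $Z_{I,\epsilon} = \{x \in \mathbb{R}^I : \mu_I(B(x,\epsilon)) = 0\}$, and it is enough to show $\mu_I(Z_{I,\epsilon}) = 0$.

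This is a standard separability argument. Cover $\mathbb{R}^I$ by countably many balls $B(x_k, \epsilon/2)$. For any $x \in B(x_k, \epsilon/2)$ we have $B(x_k, \epsilon/2) \subseteq B(x, \epsilon)$, so if $\mu_I(B(x,\epsilon)) = 0$ then also $\mu_I(B(x_k, \epsilon/2)) = 0$. Thus $Z_{I,\epsilon}$ is contained in the union of those $B(x_k, \epsilon/2)$ with $\mu_I$-measure zero, which is itself a null set. Taking the countable union over finite $I \subset \mathbb{N}$ and rational $\epsilon > 0$ completes the proof.

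There is essentially no obstacle beyond organizing the argument: once the type map is pushed forward, the statement is just the measure-theoretic fact that almost every point of a Borel measure on a separable metric space lies in the support. The result makes no use of the Markov chain structure---it only uses that the coordinates $\nu_i$ are measurable functions indexed by a countable set.
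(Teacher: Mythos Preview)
Your argument is correct and follows essentially the same route as the paper: both cover the set of non-wide points by countably many null ``boxes'' (the paper uses rational-endpoint products $D_{I,\{Q_i\}}$ directly in $\Omega$, while you push forward to $\mathbb{R}^I$ and use $\ell^\infty$-balls of half radius, which amounts to the same thing). Your explicit framing in terms of the support of the pushforward measure $\mu_I$ is a nice touch that makes the underlying standard fact transparent, but the content is identical.
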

\begin{proof}
For any finite set $I$ and any collection $\{Q_i\}_{i\in I}$ where each $Q_i$ is an interval in $\mathbb{R}$ with rational endpoints, let
\[D_{I,\{Q_i\}}=\{\omega\mid\forall i\in I\ \nu_i(\omega)\in Q_i\}.\]
There are countably many such $D_{I,\{Q_i\}}$, so it suffices to show that if $\omega$ fails to have wide type then it is contained in some $D_{I,\{Q_i\}}$ with measure $0$.

  Suppose $\omega$ does not have wide type.  Then there is an $\epsilon>0$ and a finite set $I$ such that the set of $\omega'$ so that, for every $i\in I$, $|\nu_i(\omega)-\nu_i(\omega')|<\epsilon$, has measure $0$.  For each $i\in I$, let $Q_i$ be some interval $(p_i,q_i)$ with $\nu_i(\omega)-\epsilon\leq p_i<\nu_i(\omega)<q_i\leq\nu_i(\omega)+\epsilon$ with $p_i$ and $q_i$ both rational; such an interval always exists.  Then $\omega\in D_{I,\{Q_i\}}$ and any $\omega'\in D_{I,\{Q_i\}}$ would $I,\epsilon$-almost realize $tp(\omega)$, and therefore $\mu(D_{I,\{Q_i\}})=0$.
\end{proof}

\begin{lemma}\label{thm:existence}
    Let $(\Omega,\mathcal{B},\pi),\hat p_t$ be a pseudofinite Markov chain.  Then there is a twin-free saturated pseudofinite Markov chain $(\Omega',\mathcal{B}',\pi'),\hat p'_t$ and a measurable, measure-preserving map $\rho:\Omega\rightarrow\Omega'$ so that $\hat p_t(\omega,\omega')=\hat p'_t(\rho(\omega),\rho(\omega'))$ for all $\omega,\omega'\in\Omega$.
\end{lemma}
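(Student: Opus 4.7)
My plan is to realize $\Omega'$ as the topological support of the type-embedding of $\Omega$ into $\mathbb{R}^{\mathbb{N}}$. Using the eigenvector decomposition of Lemma \ref{thm:close_rat}, define a measurable map $\rho:\Omega\to\mathbb{R}^{\mathbb{N}}$ by $\rho(\omega)(i)=\nu_i(\omega)$, set $\pi'=\rho_*\pi$, and take $\Omega'$ to be the topological support of $\pi'$ in the product topology (a closed, hence Polish, subspace of $\mathbb{R}^{\mathbb{N}}$). The key observation is that basic open neighborhoods of $q\in\mathbb{R}^{\mathbb{N}}$ are exactly the sets of $q'$ with $|q'(i)-q(i)|<\epsilon$ for $i$ in a finite $I$, so their $\pi'$-measure equals the $\pi$-measure of the set of $\omega$'s which $I,\epsilon$-almost realize $q$. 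Hence $\Omega'$ is precisely the set of wide types, and since almost every $\omega$ has wide type, $\rho$ takes values in $\Omega'$ after modifying it on a null set.

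Next I would equip $\Omega'$ with the coordinate projections $\nu'_i(q)=q(i)$ and define
\[
\hat p'_t(q,q')=\sum_i\lambda_i^t\nu'_i(q)\nu'_i(q').
\]
Three desirable properties are then essentially automatic. The compatibility identity $\hat p_t(\omega,\omega')=\hat p'_t(\rho(\omega),\rho(\omega'))$ is just Lemma \ref{thm:close_rat} written in the new coordinates. Twin-freeness is trivial: two distinct points of $\Omega'$ disagree in some coordinate $\nu'_i$ and hence have distinct types. Saturation is built into the choice of $\Omega'$ as the support, since a wide type of the new chain is exactly a point every basic neighborhood of which has positive $\pi'$-measure, that is, a point of the support of $\pi'$, and every such point lies in $\Omega'$ by construction.

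The remaining and most technical step is to verify that $(\Omega',\mathcal{B}',\pi'),\hat p'_t$ satisfies the pseudofinite Markov chain axioms. Each axiom is an a.e.\ integral identity on $\Omega$; pulling back through $\rho$ via the change-of-variables $\int f(q)\,d\pi'(q)=\int f(\rho(\omega))\,d\pi(\omega)$ combined with the compatibility identity converts each into the corresponding axiom on $\Omega'$, valid on $\rho(\Omega)$. The full ``almost every $q\in\Omega'$'' version then follows from $\pi'(\Omega'\setminus\rho(\Omega))=\pi(\rho^{-1}(\Omega'\setminus\rho(\Omega)))=0$. I expect the main obstacle to be precisely this measure-theoretic bookkeeping: the image $\rho(\Omega)$ is a priori only analytic, not Borel, so one must either work in the $\pi'$-completion or first restrict $\rho$ to the Borel full-measure set of $\omega$ with wide type guaranteed by the preceding lemma, before transferring axioms.
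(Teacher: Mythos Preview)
Your proposal is correct and follows the paper's route: realize $\Omega'$ as a set of types in $\mathbb{R}^{\mathbb{N}}$, take $\nu'_i$ to be the coordinate projections, $\pi'$ the pushforward of $\pi$, $\hat p'_t$ the eigenfunction sum, and $\rho=tp$. The paper takes $\Omega'$ to be the image of $tp$ and simply asserts saturation, whereas you take $\Omega'$ to be the support of $\pi'$; your choice makes saturation transparent (support points are exactly the wide types), at the harmless cost of having to adjust $\rho$ on a null set so that the compatibility identity holds almost everywhere rather than literally everywhere.
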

\begin{proof}
  We take $\Omega'$ to be the set of types of $(\Omega,\mathcal{B},\pi),\hat p_t$.  We define eigenvectors $\nu'_i(q)=q(i)$ and set $\hat p'_t(q,q')=\sum_i\lambda_i^t\nu'_i(q)\nu'_i(q')$.  The measurable sets $\mathcal{B}'$ are generated by the level sets of $\nu'_i$; in particular, this ensures that each $\nu'_i$, and therefore $\hat p'_t$, is measurable.  We set $\pi'(\{q\mid \nu'_i(q)\leq c\})=\pi(\{\omega\mid\nu_i(\omega)\leq c\})$.

  We define $\rho(\omega)=tp(\omega)$, which ensures $\hat p_t(\omega,\omega')=\hat p'_t(\rho(\omega),\rho(\omega'))$.  The definition of $\pi'$ ensures that this is measurable and measure-preserving.

  Note that $tp(q)=q$, so $\Omega'$ has exactly one element of each type, so is certainly twin-free and saturated.
\end{proof}



Given two twin-free pseudofinite Markov chains $\hat p$ and $\hat p'$ with the same density array, we would like to simply match up points which have the same type.  However to do this, we need to show that the eigenvectors have the same distributions, and the only way we know of to do this is to show that $\hat p_1$ and $\hat p'_1$ can both embed in the same chain $\hat p_*$.  We therefore invoke \cite{MR2594615} to do precisely that.  The resulting approach is overkill---we are first showing that $\hat p_1$ and $\hat p'_1$ can embed in a common object, and then starting over to show isomorphism of $\hat p_t$ and $\hat p'_t$.  The alternative---repeating much of the proof from \cite{MR2594615} to get the result in one step---is not much simpler, and we expect that there is a direct proof that the eigenvectors of $\hat p_1$ and $\hat p'_1$ have the same distributions, which would give a simpler proof of the whole result.

  \begin{lemma}
     Suppose $(\Omega,\mathcal{B},\pi),\hat p_t$ and $(\Omega',\mathcal{B}',\pi'),\hat p'_t$ are pseudofinite Markov chains with the same density array.  Then there is an ordering of the eigenvectors of $\hat p_1$, $\nu_1,\ldots$, and the eigenvectors of $\hat p'_1$, $\nu'_1,\ldots$ so that for any finite set $n$, the distribution of $(\nu_1,\ldots,\nu_n)$ is the same as the distribution of $(\nu'_1,\ldots,\nu'_n)$.
  \end{lemma}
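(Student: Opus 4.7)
The plan is to invoke the main result of \cite{MR2594615}, applied with $\hat p_1$ and $\hat p'_1$ viewed as symmetric measurable kernels (graphons), to produce a common pseudofinite object $(\Omega_*,\mathcal{B}_*,\pi_*),\hat p_{*,1}$ together with measure-preserving maps $\rho:\Omega\to\Omega_*$ and $\rho':\Omega'\to\Omega_*$ such that $\hat p_1(\omega_1,\omega_2)=\hat p_{*,1}(\rho(\omega_1),\rho(\omega_2))$ almost everywhere, and analogously for $\hat p'_1$ via $\rho'$. The hypothesis of that theorem is agreement of the sampling distributions of $\hat p_1$ and $\hat p'_1$, which is exactly what equality of the density arrays at $t=1$ supplies.

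Once such a common embedding exists, fix an orthonormal basis $\{\nu^*_i\}$ of eigenvectors of $\widehat{P}_{*,1}$ with eigenvalues $\lambda^*_i$, ordered so that the sequence $(\lambda^*_i)$ is non-increasing, and define $\nu_i=\nu^*_i\circ\rho$ and $\nu'_i=\nu^*_i\circ\rho'$. Because $\rho$ is measure-preserving and intertwines the kernels we compute
\[\int \hat p_1(\omega,\omega')\nu_i(\omega')\,d\pi(\omega')=\int \hat p_{*,1}(\rho(\omega),\xi)\nu^*_i(\xi)\,d\pi_*(\xi)=\lambda^*_i\nu_i(\omega),\]
so each $\nu_i$ is an eigenvector of $\widehat{P}_1$ with eigenvalue $\lambda^*_i$, and orthonormality of $\{\nu_i\}$ in $L^2(\pi)$ is immediate from orthonormality of $\{\nu^*_i\}$ together with measure-preservation. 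The same reasoning applies verbatim to $\{\nu'_i\}$.

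To see that $\{\nu_i\}$ is in fact a \emph{complete} orthonormal eigenbasis for the support of $\widehat{P}_1$ (and not merely a partial one), observe that the density array determines the function $t\mapsto G(t)=\mathbb{E}(\mathbf{X}_{0,0}(t))=\sum_i\lambda_i^t$, and hence determines the full multiset of nonzero eigenvalues of $\hat p_1$. In particular, the multisets of nonzero eigenvalues of $\hat p_1$, $\hat p'_1$ and $\hat p_{*,1}$ coincide, so the pullback eigenvectors $\{\nu_i\}$ already account for every nonzero eigenvalue of $\widehat{P}_1$ with the correct multiplicity. Finally, since $\rho$ is measure-preserving, for every $n$ and every measurable $A\subseteq\mathbb{R}^n$
\[\pi\bigl(\{\omega:(\nu_1(\omega),\dots,\nu_n(\omega))\in A\}\bigr)=\pi_*\bigl(\{\xi:(\nu^*_1(\xi),\dots,\nu^*_n(\xi))\in A\}\bigr),\]
and an identical equation holds for $\rho'$. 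Thus the joint distributions of $(\nu_1,\dots,\nu_n)$ and of $(\nu'_1,\dots,\nu'_n)$ both agree with the common distribution of $(\nu^*_1,\dots,\nu^*_n)$, as required.

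The main obstacle, as acknowledged in the paragraph preceding the lemma, is almost entirely confined to the invocation of \cite{MR2594615}: one must verify that the theorem stated there for graphon-type objects applies to $\hat p_1$ and produces a common limit object whose structural properties (symmetry, non-negativity, stochasticity at $t=1$, normalization) are inherited from the two given chains, so that $\hat p_{*,1}$ extends via its eigendecomposition to a genuine pseudofinite Markov chain. A secondary small point is the handling of eigenvalue multiplicities when ordering the $\lambda^*_i$, but any consistent tie-breaking rule (e.g.\ by the $L^2(\pi_*)$ ordering of the eigenspaces under some fixed further refinement) works, since we only need agreement \emph{in distribution} of the eigenvector sequences. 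Once the common embedding is in hand, the spectral manipulations above are routine.
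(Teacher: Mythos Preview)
Your approach is the same as the paper's in spirit: invoke \cite{MR2594615} to get a common target $(\Omega_*,\pi_*),\hat p_{*,1}$ with measure-preserving maps $\rho,\rho'$, pull back an orthonormal eigenbasis of $\widehat P_{*,1}$ along $\rho$ and $\rho'$, and read off equality of the joint laws from measure-preservation. You are in fact more explicit than the paper about why the pulled-back system is a \emph{complete} eigenbasis (via the eigenvalue multiset being determined by $G$), which is a point the paper glosses over.

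The one genuine gap---which you correctly identify but do not resolve---is the applicability of \cite{MR2594615}. That result is stated for \emph{bounded} symmetric measurable kernels, and $\hat p_1$ is in general only in $L^2(\pi\times\pi)$: nothing prevents $\hat p_1(\omega,\omega')=\sum_i\lambda_i\nu_i(\omega)\nu_i(\omega')$ from being unbounded, since the $\nu_i$ need not lie in $L^\infty$. The paper deals with exactly this by truncation: for each integer $K$ it replaces $\hat p_1$ and $\hat p'_1$ by their cutoffs $\hat p^K_1,\hat p^{',K}_1$ to $[-K,K]$, applies \cite{MR2594615} to these bounded kernels to get matching eigenvector distributions for each $K$, and then sends $K\to\infty$, using that the $i$-th eigenvector of $\hat p^K_1$ converges in $L^2$ (hence in distribution) to the $i$-th eigenvector of $\hat p_1$. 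This truncation-and-limit step is precisely the missing ingredient in your argument; once you insert it, your proof and the paper's coincide.
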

  \begin{proof}
For each positive integer $K$, let
\[\hat p^K_1(\omega,\omega')=\left\{\begin{array}{ll}
K&\text{if }\hat p_1(\omega,\omega')>K\\
-K&\text{if }\hat p_1(\omega,\omega')<-K\\
\hat p_1(\omega,\omega')&\text{otherwise}
\end{array}\right..\]
Define $\hat p^{'K}_1$ similarly.  Since the distributions of $\hat p^K_1$ and $\hat p^{'K}_1$ are the same, they have the same moments in the sense of \cite{MR2594615}, and so by the main result of that paper, they are weakly isomorphic: there are measure-preserving, measurable embeddings $\rho,\rho'$ into some common space $(\Omega^K_*,\mathcal{B}^K_*,\pi^K_*),\hat p^K_{1,*}$ so that $\hat p^K_1(\omega,\omega')=\hat p^K_{1,*}(\rho(\omega),\rho(\omega'))$ almost everywhere and $\hat p^{',K}_1(\omega,\omega')=\hat p^K_{1,*}(\rho'(\omega),\rho'(\omega'))$ almost everywhere.

As noted above, each $\hat p^K_{1,*}$ has an eigenvector decomposition, and the pullbacks of the eigenvectors of $\hat p_{1,*}$ under $\rho$ and $\rho'$ must be eigenvectors of $\hat p^K_1$ and $\hat p^{',K}_1$.  In particular, fixing an enumeration of the eigenvectors of $\hat p^K_{1,*}$, the pullback of the first $n$ eigenvectors to $\hat p^K_1$ has the same distribution as the pullback of the first $n$ eigenvectors eigenvector to $\hat p^{',K}_1$.

By choosing $K$ larger and larger, the $i$-th eigenvector of $\hat p^K_1$ converges in $L^2$ norm, so also in distribution, to the $i$-th eigenvector of $\hat p_1$, and similarly for $\hat p'_1$.  This means that for any $n$, the first $n$ eigenvectors of $\hat p_1$ have the same distribution as the first $n$ eigenvectors of $\hat p'_1$.
  \end{proof}

\begin{theorem}\label{thm:uniqueness}
  Suppose $(\Omega,\mathcal{B},\pi),\hat p_t$ and $(\Omega',\mathcal{B}',\pi'),\hat p'_t$ are twin-free saturated pseudofinite Markov chains with the same density array.  Suppose $\mathcal{B}$ and $\mathcal{B}'$ are, respectively, the smallest $\sigma$-algebras which make $\hat p_t$ and $\hat p'_t$ measurable.  Then there is a measure-preserving $\phi:\Omega\rightarrow\Omega'$ which is a bijection up to sets of measure $0$.
\end{theorem}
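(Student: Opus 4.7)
My plan is to identify both chains with a canonical space of types. Using the previous lemma, I fix enumerations of the eigenvectors of $\hat p_1$ and $\hat p'_1$ so that for every $n$, the joint distribution of $(\nu_1,\ldots,\nu_n)$ under $\pi$ agrees with the joint distribution of $(\nu'_1,\ldots,\nu'_n)$ under $\pi'$. This equality of finite-dimensional distributions immediately implies that the notion of a wide type depends only on these distributions: a function $q:\mathbb{N}\to\mathbb{R}$ is a wide type with respect to $(\Omega,\hat p)$ if and only if it is a wide type with respect to $(\Omega',\hat p')$.

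With this alignment in hand, I would define $\phi:\Omega\to\Omega'$ by sending each $\omega$ of wide type to the unique $\omega^*\in\Omega'$ satisfying $\nu'_i(\omega^*)=\nu_i(\omega)$ for every $i$. Existence of $\omega^*$ comes from the saturation of $(\Omega',\hat p')$ applied to the wide type $tp(\omega)$, and uniqueness comes from the twin-freeness of $(\Omega',\hat p')$. Since almost every point of $\Omega$ has wide type, this defines $\phi$ on a conull set; extend arbitrarily on the null remainder. By construction $\nu'_i\circ\phi=\nu_i$ almost surely, so the spectral decomposition gives $\hat p_t(\omega,\omega'')=\hat p'_t(\phi(\omega),\phi(\omega''))$ almost everywhere.

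Next I would verify measurability and measure-preservation. Using that $\mathcal{B}'$ is the smallest $\sigma$-algebra making $\hat p'_t$ measurable, together with the recovery formula $\nu'_i(\omega)=\lambda_i^{-1}\int \hat p'_1(\omega,\xi)\nu'_i(\xi)\,d\pi'(\xi)$ applied to the spectral decomposition, one checks that $\mathcal{B}'$ is generated, modulo null sets, by the level sets $\{\omega':\nu'_i(\omega')\le c\}$. Under $\phi$, these pull back to the corresponding level sets of the $\nu_i$, which lie in $\mathcal{B}$; so $\phi$ is measurable. Because the joint distributions of $(\nu_1,\ldots,\nu_n)$ and $(\nu'_1,\ldots,\nu'_n)$ agree, the pushforward $\phi_*\pi$ agrees with $\pi'$ on this generating $\pi$-system and hence on all of $\mathcal{B}'$.

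For bijectivity, I would construct $\psi:\Omega'\to\Omega$ by the symmetric procedure using saturation and twin-freeness of $(\Omega,\hat p)$. Then $\psi\circ\phi(\omega)$ and $\omega$ realize the same type whenever both have wide type, so twin-freeness of $(\Omega,\hat p)$ forces them to agree on a conull set, and similarly for $\phi\circ\psi$; thus $\phi$ is a measure-preserving bijection up to sets of measure zero. The step I expect to be the main obstacle is the identification of $\mathcal{B}$ and $\mathcal{B}'$ (up to null sets) with the $\sigma$-algebras generated by the eigenvectors, because this is what converts agreement of finite-dimensional distributions of the $\nu_i$ into a genuine isomorphism of the underlying measure spaces.
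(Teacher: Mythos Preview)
Your proposal is correct and follows essentially the same route as the paper: align the eigenvector sequences via the preceding lemma, observe that wide types coincide, and use saturation plus twin-freeness to define the type-matching map $\phi$. The only cosmetic differences are that you build an explicit inverse $\psi$ where the paper argues injectivity and surjectivity directly, and you spell out (via the eigenfunction recovery formula) why $\mathcal{B}'$ is generated by the $\nu'_i$, a point the paper leaves implicit.
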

\begin{proof}
By the previous lemma, we may order the eigenvectors of $(\Omega,\mathcal{B},\pi),\hat p_t$ and $(\Omega',\mathcal{B}',\pi'),\hat p'_t$ so that for each $n$, the first $n$ eigenvectors of $\hat p_1$ have the same distribution as the first $n$ eigenvectors of $\hat p'_1$.  (Note that $\phi$ need not be unique: there could be multiple ways of ordering the eigenvectors so the distributions match up.)

In particular, this means that a type is wide in $\hat p_1$ iff it is wide in $\hat p'_1$.  Almost every point $\omega$ of $\Omega$ is a point with a wide type, and there is exactly one point $\omega'\in\Omega'$ with the same type, so we set $\phi(\omega)=\omega'$.  This is injective (distinct points have distinct types by the twin-freeness of $\Omega$) and surjective up to measure $0$ (almost every point in $\Omega'$ has wide type).  Measurability and the measure-preserving property follows since the inverse image of the set of $I,\epsilon$-almost realizers in $\Omega'$ of some wide type is precisely the the $I,\epsilon$-almost realizers in $\Omega$.  Since $\nu_i(\phi(\omega))=\nu_i(\omega)$ for all $i$ and almost all $\omega$, $\hat p_t(\omega,\omega')=\hat p'_t(\phi(\omega),\phi(\omega'))$ almost everywhere as desired.
\end{proof}

\bibliographystyle{spmpsci}
\bibliography{../../Bibliographies/main}
\end{document}